\numberwithin{equation}{section} 
\newcommand{\R}{\mathbb{R}}
\newcommand{\U}{\mathcal{U}} 
\newcommand{\E}{\mathbb{E}}
\newcommand{\1}{\mathbb{1}} 
\newcommand{\sX}{\mathcal{X}}
\newcommand{\sP}{\mathcal{P}}
\def\sO {\mathcal{O}}
\newcommand{\sH}{\mathcal{H}}
\renewcommand{\d}{\mathrm{d}} 
\newcommand{\Id}{\mathrm{Id}}
\newcommand{\KL}{\mathrm{KL}}
\newcommand{\gap}{\mathrm{Gap}}
\newtheorem{theorem}{Theorem}[section]
\newtheorem{prop}[theorem]{Proposition}
\newtheorem{corollary}[theorem]{Corollary}
\newtheorem{lemma}[theorem]{Lemma}
\newtheorem{definition}[theorem]{Definition}
\newtheorem{asmp}{Assumption}
\newtheorem*{asmp*}{Assumption}
\theoremstyle{remark}
\newtheorem{remark}[theorem]{Remark}
\begin{document}

\title{Entropy contraction of the Gibbs sampler under log-concavity}
\author{Filippo Ascolani\thanks{Duke University, Department of Statistical Science, Durham, NC, United States (filippo.ascolani@duke.edu)}, \, Hugo Lavenant\thanks{Bocconi University, Department of Decision Sciences and BIDSA, Milan, Italy (hugo.lavenant@unibocconi.it)} \, and Giacomo Zanella\thanks{Bocconi University, Department of Decision Sciences and BIDSA, Milan, Italy (giacomo.zanella@unibocconi.it)\\HL acknowledges the support of the MUR-Prin 2022-202244A7YL funded by the European Union - Next Generation EU. GZ acknowledges support from the European Research Council (ERC), through StG ``PrSc-HDBayLe''
grant ID 101076564.}}
\date{\today}

\maketitle

\begin{abstract}
The Gibbs sampler (a.k.a. Glauber dynamics and heat-bath algorithm) is a popular Markov Chain Monte Carlo algorithm which iteratively samples from the 
conditional distributions of a probability measure $\pi$ of interest. 
Under the assumption that $\pi$ is strongly log-concave, we show that the random scan Gibbs sampler contracts in relative entropy and provide a sharp characterization of the associated contraction rate. 
Assuming that evaluating conditionals is cheap compared to evaluating the joint density, our results imply that the number of full evaluations of $\pi$ needed for the Gibbs sampler to mix grows linearly with the condition number and is independent of the dimension.
If $\pi$ is non-strongly log-concave, the convergence rate in entropy degrades from exponential to polynomial. Our techniques are versatile and extend to Metropolis-within-Gibbs schemes and the Hit-and-Run algorithm. 
A comparison with gradient-based schemes and the connection with the optimization literature are also discussed.
\end{abstract}

\section{Introduction}\label{sec:intro}

\subsection*{MCMC and Gibbs Sampling}

Sampling from a target probability distribution $\pi$ is a key task arising in many scientific areas \citep{liu2001monte}, which becomes particularly challenging when $\pi$ is high-dimensional and not analytically tractable.
This task is often accomplished with Markov Chain Monte Carlo (MCMC) algorithms \citep{B11}, which simulate a Markov chain $(X^{(n)})_{n=0,1,2,\dots}$ whose distribution converges to $\pi$ as $n \to \infty$. 
When studying MCMC schemes, a fundamental step is to determine how quickly this convergence occurs, i.e. how long the algorithm should be run for.

In this paper we focus on the Gibbs sampler (GS), which is one of the most canonical and popular examples of MCMC algorithms \citep{geman1984stochastic,C92}. It is a coordinate-wise method, which works in the setting where the state space $\sX$ of $\pi$ decomposes as a product space $\sX = \sX_1 \times \sX_2 \times \ldots \times \sX_M$. For an index $m$, we write $\pi(\cdot | X_{-m}) \in \sP(\sX_m)$ for the ``full conditional'', that is the conditional distribution of $X_m$ given $X_{-m} = (X_1, \ldots, X_{m-1}, X_{m+1}, \ldots, X_M)$ under $X\sim \pi$. 
We focus on the \emph{random scan} version of GS, which works by selecting a coordinate at random for every iteration and updating it from the corresponding full conditional. This is described in Algorithm \ref{alg:rs-gs}.  
The resulting Markov chain is $\pi$-invariant and the law of $X^{(n)}$ converges to $\pi$ as $n\to\infty$ under mild regularity assumptions, see e.g.\  \citep{roberts1994simple} for weak conditions in the case $\sX=\R^d$.
Crucially, GS only requires the user to simulate from $\pi(\cdot|X_{-m})$, which is often a much easier task than simulating directly from $\pi$, either because $\sX_m$ is low-dimensional (e.g.\ $\sX_m=\R$ while $\sX=\R^d$ with $d\gg 1$) or because $\pi(\cdot|X_{-m})$ belongs to some known class of analytically tractable distribution; see Section \ref{sec:implementation} for more details.

\begin{algorithm}[htbp]
\begin{algorithmic}
\State Initialize $X^{(0)}=(X^{(0)}_1,\dots,X^{(0)}_M)\in \sX$
\For{$n=0,1,2,\dots,$}
    \State Sample $m_n \in \{1,\dots,M\}$ uniformly at random
    \State Sample $X^{(n+1)}_{m_n} \in \sX_{m_n}$ from the distribution $\pi(\cdot | X^{(n)}_{-m_n})$
    \State Set $X^{(n+1)}_i= X^{(n)}_i$ for $i\neq m_n$
   \EndFor
\end{algorithmic}
\caption{(Random Scan Gibbs Sampler)
\label{alg:rs-gs}}
\end{algorithm}

\subsection*{Log-concave sampling}

Convergence of MCMC methods is often studied under log-concavity assumptions, meaning that $\sX=\R^d$ and $\pi(\d x) = \exp(-U(x)) \d x$ with $U$ (often called potential of $\pi$) being a convex function; see Section~\ref{section:convexity} for more detailed assumptions on $U$. 
In the last decade or so, the log-concave sampling literature has witnessed remarkable advancements in the quantitative convergence analysis of MCMC algorithms, especially gradient-based ones such as Langevin \cite{D17, DM17,eberle2019couplings,D19,wu2022minimax} or Hamiltonian \cite{bou2020coupling,chen2020fast} Monte Carlo. See  e.g.\ \citep{chewi2023log} for a monograph describing progress on the topic. 
On the contrary, despite its popularity and widespread use, very few results that quantify the convergence speed of GS under log-concavity are available, see Section \ref{sec:lit_review} for a review. 
The present work intends to fill this substantial gap, providing explicit and sharp results for GS targeting log-concave distributions.   
Our main result is summarized in the next paragraph, and its computational implications are briefly discussed afterwards.

\subsection*{Main result}
In the above notation, take $\sX_m = \R^{d_m}$ for $m=1, \ldots, M$, so that $\sX = \R^d$ with $d=d_1 + \ldots + d_M $. 
In particular, $M=d$ if all blocks are of dimension one, i.e.\ $d_m=1$ for all $m$.
Let $P^\mathrm{GS}$ be the transition kernel associated to Algorithm \ref{alg:rs-gs} with $\pi = \exp(-U)$, and assume $U$ is $\lambda$-convex and $L$-smooth (see Sections \ref{subsec:GS_Pm} and \ref{section:convexity} for formal definitions). 
Then our key bound reads
\begin{equation}
\label{eq:main_intro}
\KL(\mu P^\mathrm{GS} | \pi) \leq \left( 1 - \frac{1}{\kappa M} \right) \KL(\mu | \pi),
\end{equation}
where $\kappa = L/\lambda$ is the condition number of $U$, $\KL$ stands for the Kullback-Leibler divergence or relative entropy, and $\mu$ is an arbitrary distribution on $\R^d$ (with $\KL(\mu | \pi)<\infty$, otherwise the bound is trivial). 
Actually, our main result (Theorem~\ref{theo:contraction_KL_main}) is finer, since it allows for separable non-smooth parts in the potential and it depends on a ``coordinate-wise'' condition number $\kappa^*$, which leverages the block structure of the problem.
Explicit computations on Gaussian distributions~\cite{amit1996convergence} show that the resulting bound is tight up to small constant factors, see Section~\ref{subsec:tightness}.  

Denoting by $\mu^{(n)}$ the law of the Markov chain after $n$ steps, the bound in~\eqref{eq:main_intro} directly implies a geometric decay of $\KL(\mu^{(n)} | \pi)$ towards zero, and in particular it implies that the mixing time of GS, that is, the number of iterations needed to reach an error (in $\mathrm{TV}$ or $\KL$) from $\pi$ of $\epsilon$, is of order
$\sO(M\kappa\log(1/\epsilon))$, 
see Corollary \ref{crl:GS_mix_time} for more details.  
It also implies a bound on the spectral gap of $P^\mathrm{GS}$ (Corollary \ref{crl:variance_inequality}), and it admits some variational representations in terms of marginal and conditional entropies and variances that can be of independent interest; see \eqref{eq:functional_main} and \eqref{eq:variance_inequality}.

\begin{remark}
A popular variant of Algorithm~\ref{alg:rs-gs} is the deterministic scan version, where $X_m$ is updated cyclically for $m=1,\ldots,M$ at each iteration. 
In this work we focus on the random scan case since it enjoys better theoretical properties.  
A parallel can be drawn with the literature on convex optimization, where it is well-known that random scan coordinate descent algorithms have better worst-case complexity than deterministic scan ones \cite{W15}. 
We expect similarly that deterministic scan GS behaves worse than random scan GS in terms of worst-case performance for log-concave targets, even if we are not aware of rigorous results in that direction.
\end{remark}

\subsection*{Computational implications and comparison with gradient-based schemes}

In many relevant settings, the computational cost of one iteration of Algorithm \ref{alg:rs-gs} (which only performs one update from a single ``full conditional'' distribution) is $M$ times smaller than the cost of evaluating the joint log-density $U$ (or its gradient) once; see Section~\ref{sec:comparison_gradient} for details. 
In those cases, our result implies that GS produces an $\epsilon$-accurate sample with a total computational cost comparable to 
\[
\sO(\kappa\log(1/\epsilon))
\]
target evaluations. 
This quantity scales linearly with $\kappa$, and, remarkably, \emph{does not depend on the dimension $d$}. 
By contrast, in the log-concave setting, gradient-based methods such as Langevin or Hamiltonian Monte Carlo 
require a number of target or gradient evaluations for each $\epsilon$-accurate sample that grows with both $\kappa$ and $d$. 
Intuitively, the dimension-free performances of GS as opposed to gradient-based MCMC schemes are related to the fact that the former does not incur any discretization error, while the latter do (due e.g.\ to Euler-Maruyama or other type of approximations).
See Sections \ref{sec:comparison_gradient} and \ref{sec:comparison_optimization} for more details.
Perhaps counter-intuitively, the same dimension-free behavior holds true also for coordinate-wise MCMC methods that employ a Metropolis-Hastings correction inside each full conditional update, such as Metropolis-within-Gibbs schemes: see Section~\ref{sec:MwG} and in particular Theorem \ref{thm:gap_MwG_RWM}.

\subsection*{Extension to Hit-and-Run and variants}

Our main result also allows to analyze other MCMC algorithms which rely on conditional updates (or equivalently on ``random projections"). One notable example is the Hit-and-Run sampler \citep{vempala2005geometric}, described in Algorithm \ref{alg:h_and_run} and whose transition kernel we denote as $P^\mathrm{HR}$.
In words, $P^\mathrm{HR}$ picks uniformly at random a line passing through the current point $x$ and samples the next point from $\pi$ restricted to that line. 

\begin{algorithm}[htbp]
\begin{algorithmic}
\State Initialize $X^{(0)}\in \R^d$
\For{$n=0,1,2,\dots,$}
    \State Sample a direction $v$ uniformly at random from $\mathbb{S}^{d-1}:=\{z\in\R^d\,:\,\|z\|=1\}\subseteq\R^d$
    \State Sample $s\in \R$ with density $\pi_v(s)$ proportional to $\pi\left(X^{(n)}+sv\right)$
    \State Set $X^{(n+1)} = X^{(n)}+sv$
   \EndFor
\end{algorithmic}
\caption{(Hit-and-Run Sampler)
\label{alg:h_and_run}}
\end{algorithm}

With $\pi = \exp(-U)$ and $U$ being $\lambda$-convex and $L$-smooth, we prove the following entropy contraction result (see Theorem~\ref{theo:HR_ell})
\[
\KL(\mu P^{\mathrm{HR}} | \pi) \leq \left( 1 - \frac{1}{\kappa d} \right) \KL(\mu|\pi)\,,
\]
with $\kappa=L/\lambda$ as above. 
The result is a consequence of \eqref{eq:main_intro} after noting that $P^\mathrm{HR}$ can be written as a mixture of GS kernels over uniformly rotated coordinate axes.
We discuss more in details Hit-and-Run and other more general sampling schemes based on random projections in Section \ref{sec:extension_HR}.

\subsection*{Proof technique}
The core of the technical work is to prove~\eqref{eq:main_intro}. The strategy drastically differs from the analysis of gradient-based MCMC methods: while for the latter a common approach interprets them as discretizations of continuous diffusion processes, the Gibbs sampler intrinsically lives in discrete time, with potentially large jumps between $X^{(n)}$ and $X^{(n+1)}$ (i.e.\ the associated Markov operator is intrinsically ``non-local''). Our approach is based on the variational characterization of the Gibbs kernel via $\KL$ (see Lemma~\ref{lm:variational_characterization_Pm}): leveraging it, we use well-crafted transport maps, inspired by the literature on coordinate-wise optimization, to provide lower bounds on the decay of $\KL$ when applying the GS kernel.  
A key step in our proof is to rely on triangular transport maps (specifically the Knothe-Rosenblatt map), which allow for a neat decomposition of the entropy that relates maps jointly moving all coordinates with maps moving one coordinate at a time (see Lemma \ref{lm:entropy_triangular_eq}).
Contrarily to many works on gradient-based sampling, our proof technique makes no direct use of optimal transport maps or Wasserstein geometry (see Remark~\ref{rmk:why_not_OT} for related discussion). 

\subsection*{Structure of the paper}
After introducing some notations and the required assumptions on the potential $U$ (Section \ref{sec:notation_background}), we state our main result (Section~\ref{sec:main}). Its proof is postponed until Section~\ref{sec:proof_main}. Before that, we review the implications of entropy contraction in terms of mixing time, spectral gaps and Brascamp-Lieb inequalities (Sections \ref{sec:mix_times} and \ref{sec:spectral_gap}), the tightness of our bound (Section~\ref{subsec:tightness}) and discuss the computational implications of our main result 
(Section~\ref{section:computational_cost}). Section \ref{sec:non_strongly_convex} analyzes the non-strongly convex case, when $\lambda = 0$.
Finally, we move to extensions of our results to Metropolis-within-Gibbs schemes (Section~\ref{sec:MwG}) as well as Hit-and-Run and other variants (Section~\ref{sec:extension_HR}).

\subsection{Related works}\label{sec:lit_review}

Due to its popularity across different scientific communities, the GS (also known as Glauber dynamics, heat-bath and coordinate hit-and-run algorithm) has received a lot of attention over the years. 
Here we briefly discuss some of the main lines of work on the topic, even if an exhaustive review is well beyond the scope of the section.

Explicit results are available for specific and analytically tractable families of distributions, such as Gaussians \cite{amit1996convergence, roberts1997updating} or other fully conjugate exponential families \cite{D08}.
In those cases, a full diagonalization and spectral analysis is possible using appropriate multivariate orthogonal polynomials, see e.g.\ \cite{khare2009rates}. We also mention the recent work \cite{caputo2024entropy}, which derives explicit tight bounds for the entropy contraction coefficient for a class of Gaussian measures.
Other works have focused on specific problems arising in high-dimensional Bayesian statistics, e.g. hierarchical \citep{Q19, Y17,ascolani2024dimension}  or regression \citep{Q22} models, possibly combining the analysis with random data-generating assumptions and statistical asymptotics.

In general settings, the convergence speed of GS has been related to the so-called Dobrushin interdependence coefficients \citep{dobrushin1970prescribing}, see e.g.\ \cite{dyer2008dobrushin,wang2014convergence}. This approach builds upon a long line of work dealing with transport and log-Sobolev inequalities for Gibbs measures, see e.g.\ \cite{zegarlinski1992dobrushin} as well as Section 11 of the review \cite{gozlan2010transport} and the references therein. 
More recently, spectral independence \cite{anari2021spectral} and stochastic localization \citep{chen2025localization} techniques were successfully applied to bound GS mixing times for various discrete models, such as spin systems, see e.g.\ the review in \cite{caputo2023lecture}.
However, we are not aware of successful application of such techniques to study GS targeting general log-concave distributions (see Remark \ref{rmk:prox_sampl} for some exceptions).

Explicit results on coordinate-wise MCMC methods for log-concave distributions include \cite{tong2020mala}, who studies Metropolis-adjusted coordinate-wise MCMC methods for sparse dependence structures under strong conditions on the target potential (e.g.\ bounded derivatives)
and \cite{ding2021random,ding2021underdamped}, who study coordinate-wise (unadjusted) Langevin Monte Carlo schemes. The latter works provide explicit bounds on mixing times in Wasserstein distance under log-concavity, which however have significantly worse dependence on both $d$, $\kappa$ and $\epsilon$ relative to \eqref{eq:main_intro}, arguably due to the presence of Euler-Maruyama discretization error, and lead to a computational complexity roughly similar to standard Langevin Monte Carlo, see e.g.\ \cite{ding2021langevin} for more discussion.

GS is also widely used to sample from uniform distributions on convex bodies, and explicit bounds on their mixing times have been recently established in that contexts \cite{laddha2023convergence} (which scale as polynomial of higher degrees with respect to dimensions and are usually based on conductance bounds).
However, this context is quite different from ours, and it is unclear how much the associated results and techniques are related to the ones employed herein.

Finally, beyond sampling, recent works \citep{arnese2024convergence, LZ2024CAVI} analyze coordinate ascent variational inference (CAVI) algorithms, which compute a product-form approximation of the distribution of interest, under log-concavity assumptions. 
Some of the techniques and computations of the present work are inspired by~\cite{LZ2024CAVI} from the last two authors.
Relative to the CAVI case, however, the GS one is significantly more complex due to the fact that $\mu^{(n)}$ does not factorize across coordinates, i.e.\ the space $\sP(\sX)$ where GS operates does not have a product structure, contrary to the space $\sP(\sX_1)\times\dots\times\sP(\sX_M)$ where CAVI operates. 
This implies that, for example, GS cannot be directly interpreted as a coordinate descent algorithm in the space of measures. 

\section{Notation and assumptions}\label{sec:notation_background}
Throughout the paper we take $\sX_m = \R^{d_m}$ for $m=1, \ldots, M$ so that 
$\sX=\R^d$ with $d = d_1 + \ldots + d_M$. For a point $x=(x_1, \ldots, x_M)\in \R^d$, we write $x_{-m} = (x_1, \ldots, x_{m-1}, x_{m+1}, \ldots, x_M)$, which is an element of $\sX_{-m} = \bigtimes_{i \neq m} \sX_i$. For $x,y\in\R^d$, we also write 
\begin{equation*}
(y_m, x_{-m}) = (x_1, \ldots, x_{m-1}, y_m, x_{m+1}, \ldots, x_M)
\end{equation*}
for the vector $x$ whose $m$-th component has been replaced by the $m$-th component of $y$. 

We denote the Borel $\sigma$-algebra over $\R^d$ as $\mathcal{B}(\R^d)$, the associated space of probability distributions as $\sP(\R^d)$, and the space of square integrable functions with respect to $\pi$ as $L^2(\pi)$. 
For $\mu\in\sP(\R^d)$ and $X\sim \mu$, we write $\mu_{-m}\in\sP(\sX_{-m})$ for the marginal distribution of $X_{-m}$ and $\mu(\cdot | x_{-m})\in\sP(\sX_m)$ for the conditional distribution of $X_m$ given $X_{-m}=x_{-m}$. 
We denote the push-forward of a measure $\mu$ by a map $\phi$ as $\phi_\# \mu$, it satisfies $\phi_\# \mu(A) = \mu\left(\phi^{-1}(A)\right)$ for any $A \in \mathcal{B}(\R^d)$.
When a measure has a density with respect to the Lebesgue measure $\d x$, we use the same letter to denote the measure and its density.
Finally, when $P : \R^d\times \mathcal{B}(\R^d) \to [0,1]$ is a Markov kernel, $\mu\in\sP(\R^d)$ and $f:\R^d\mapsto \R$, we define $\mu P\in\sP(\R^d)$ as $\mu P (A) = \int P(x,A) \, \mu(\d x)$ and $Pf:\R^d\mapsto \R$ as $Pf(x) = \int f(y) \, P(x,\d y)$.

Recall that the Kullback-Leibler (KL) divergence between $\mu, \nu \in \sP(\R^d)$ is defined as 
\begin{equation*}
\KL(\mu | \nu) = \begin{cases}
\displaystyle{\int_{\R^d}\log \left(\frac{\d \mu}{\d \nu}(x) \right) \mu(\d x) } & \text{if } \mu \ll \nu, \\
+ \infty & \text{otherwise.}
\end{cases}
\end{equation*}
The functional $\KL$ is non-negative, jointly convex and vanishes only when $\mu = \nu$~\cite[Section 9.4]{AGS}. 
The additivity properties of the logarithm give the following classical decomposition~\cite[Appendix A]{leonard2013}:
for any $m =1,\ldots, M$,  
\begin{equation}\label{eq:KL_chain_rule}
\KL(\mu | \nu) = \KL(\mu_{-m} | \nu_{-m}) + \E_{X_{-m} \sim \mu_{-m}} \left( \KL(\mu(\cdot | X_{-m} ) | \nu(\cdot | X_{-m} )) \right).
\end{equation} 

\subsection{The Gibbs sampler Markov kernel}
\label{subsec:GS_Pm}

For $m = 1, \ldots, M$, we write $P_m : \R^d \times \mathcal{B}(\R^d) \to [0,1]$ for the Markov kernel defined as 
\begin{equation*}
P_m(x,A) = \int_{\sX_m} \1_{A}(y_m, x_{-m}) \, \pi(\d y_m | x_{-m} ),
\end{equation*}
with $\1_A(\cdot)$ denoting the indicator function of the set $A$. Intuitively, $P_m(x,\cdot)$ keeps the component of $x$ on $\sX_{-m}$ unchanged while the component on $\sX_m$ is drawn according to the full conditional $\pi(\cdot | x_{-m} )$. This is summarized in the next well-known lemma whose proof we omit.   

\begin{lemma}
\label{lm:interpretation_Pm}
For any $m=1,\ldots, M$ the following holds.
\begin{enumerate}
\item[(a)] Let $\mu \in \sP(\R^d)$ and $\nu = \mu P_m$. Then $\mu_{-m} = \nu_{-m}$ and $\nu( \cdot | x_{-m}) = \pi( \cdot | x_{-m})$ for $\mu_{-m}$-almost every $x_{-m} \in \sX_{-m}$.
\item[(b)] If $f \in L^2(\pi)$ and $X \sim \pi$ then $P_m f(x) = \E(f(X) | X_{-m}=x_{-m})$.
\end{enumerate}
\end{lemma}

 The next lemma, which is an immediate consequence of~\eqref{eq:KL_chain_rule}, the non-negativity of $\KL$ and part (a) of Lemma \ref{lm:interpretation_Pm}, provides a variational characterization of $P_m$ in terms of the relative entropy.

\begin{lemma}[Variational characterization]
\label{lm:variational_characterization_Pm}
Let $\mu \in \sP(\R^d)$ and $\KL(\mu|\pi) < + \infty$. Then for any $\nu \in \sP(\R^d)$ with $\nu_{-m} = \mu_{-m}$ we have
\begin{equation*}
\KL(\mu P_m | \pi)=\KL(\mu_{-m} | \pi_{-m})  \leq \KL(\nu | \pi).
\end{equation*}
\end{lemma}

As described in Algorithm~\ref{alg:rs-gs}, random scan GS corresponds to choosing the index $m$ at random at every iteration. At the level of the Markov kernel, it translates into a mixture of the $P_m$'s. 
The GS Markov kernel is thus defined as
\begin{equation}\label{eq:def_GS}
P^\mathrm{GS} = \frac{1}{M} \sum_{m=1}^M P_m. 
\end{equation}

\begin{remark}[Invariance of GS under coordinate-wise transformations]
\label{rmk:idp_parametrization}
Let $\phi :\R^d\to \R^d$ be an invertible transformation which preserves the block structure, that is, $\phi_m(x)$ depends only on $x_m$ for $m=1, \ldots, M$. In other words, $\phi(x) = (\phi_1(x_1), \ldots, \phi_M(x_M))$. Let $\hat{P}^\mathrm{GS}$ be the GS kernel with target distribution $\hat{\pi} = \phi_\# \pi$. Then, it is not difficult to check that for any $\mu \in \sP(\R^d)$ we have $\phi_\# \left( \mu P^\mathrm{GS} \right) = (\phi_\# \mu) \hat{P}^\mathrm{GS}$. Writing $\hat{\mu} = \phi_\# \mu$, as the $\KL$ is invariant with respect to invertible transformations, we deduce
\begin{equation*}
\KL(\mu | \pi) = \KL(\hat{\mu} | \hat{\pi}), \qquad \KL(\mu P^\mathrm{GS} | \pi) = \KL( \hat{\mu} \hat{P}^\mathrm{GS} | \hat{\pi} ). 
\end{equation*}
Thus, we can apply any invertible change of variables $\phi :\R^d\to \R^d$, as long as it preserves the block structure, and the convergence speed of GS in $\KL$ is the same for $\pi$ and $\hat{\pi}$. 
\end{remark}

\subsection{Assumptions on the potential}\label{section:convexity}
Our main working assumption is that $\pi$ is a log-concave distribution. We call $U : \R^d \to \R$ the negative log density of $\pi$, that is, $\pi(\d x) = \exp(-U(x)) \d x$, and we assume that $U$ is convex. To get quantitative rates, we need quantitative assumptions related to the convexity of the potential. We start by recalling the classical notion of smoothness and strong convexity widely used in convex optimization~\cite[Chapter 2]{nesterov2018lectures} and log-concave sampling \cite{chewi2023log}.

\begin{asmp}
\label{asmp:convex_smooth}
The measure $\pi \in \sP(\R^d)$ has density $\exp(-U)$ and $U : \R^d \to \R$ is
\begin{itemize}
\item[(i)] $L$-smooth, in the sense that $\nabla U : \R^d \to \R^d$ is $L$-Lipschitz;
\item[(ii)] $\lambda$-convex, in the sense that $x \mapsto U(x) - \frac{\lambda}{2} \|x \|^2$ is a convex function. 
\end{itemize}
We assume $\lambda>0$ and write $\kappa = L/\lambda$ for the condition number of $U$.
\end{asmp}

The condition number $\kappa$ is independent from the choice of the orthogonal basis of $\R^d$ and larger $\kappa$ indicates that $\pi$ is more anisotropic and harder to sample from. However, GS dynamics are heavily dependent on the coordinate system and, as highlighted in Remark~\ref{rmk:idp_parametrization}, vastly different scales do not necessarily slow down convergence of GS as long as they are located in the same block. Following the literature on coordinate-wise optimization \cite{nesterov2012efficiency,RT14}, we will use a refined condition number, which depends on the block structure of the problem. 
Also, we will relax Assumption \ref{asmp:convex_smooth} by allowing for separable non-smooth terms in the potential $U$.
Below we write $\nabla_m U : \R^d \to \R^{d_m}$ for the gradient of $U$ with respect to the $m$-th coordinate $x_m\in\sX_m$.   

\begin{asmp}
\label{asmp:convex_smooth_block}
The measure $\pi \in \sP(\R^d)$ has density $\exp(-U)$ and $U : \R^d \to \R$ decomposes as 
\begin{equation*}
U(x) = U_0(x) + \sum_{m=1}^M U_m(x_m)\,,
\end{equation*}
with $U_m : \sX_m \to \R$ being a convex function, for each $m=1, \ldots, M$. The function $U_0$ is of class $C^1$ and:
\begin{itemize}
\item[(i)] 
for every $m =1, \ldots, M$ and $\bar{x}_{-m} \in \sX_{-m}$, the function $x_m \mapsto U_0(x_m, \bar{x}_{-m})$ is $L_m$-smooth, in the sense that $x_m \mapsto \nabla_m U_0(x_m,\bar{x}_{-m})$ is $L_m$-Lipschitz;
\item[(ii)] the function $U_0$ is $\lambda^*$-convex for the distance $\| \cdot \|_L$ defined as $\| x \|_L^2 = \sum_m L_m \| x_m \|^2$, in the sense that $x \mapsto U_0(x) - \frac{\lambda^*}{2} \|x \|_L^2$ is a convex function.  
\end{itemize}
We assume that $\lambda^*>0$ and refer to $\kappa^*=1/\lambda^*$ as coordinate-wise condition number of $U$.
\end{asmp}

In Assumption \ref{asmp:convex_smooth_block} the separable part $\sum_m U_m(x_m)$ needs not to be smooth nor strongly convex, only convex. 
The $\lambda^*$-convexity of $U_0$ with respect to $\|\cdot\|_L$ is equivalent to the usual $\lambda^*$-convexity of the function $\hat{U}_0 (x)= U_0(x_1/\sqrt{L_1}, \ldots, x_M / \sqrt{L_M})$.
Thus we can check part (ii) of Assumption~\ref{asmp:convex_smooth_block} proceeding in two steps: first rescale each coordinate $x_m$ by $L_m^{-1/2}$ so that its partial derivative becomes $1$-Lipschitz, secondly check what is the convexity constant $\lambda^*$ of the resulting potential.

Assumption~\ref{asmp:convex_smooth_block} is strictly weaker than Assumption~\ref{asmp:convex_smooth}, as stated in the following.
\begin{lemma}
If $U$ satisfies Assumption~\ref{asmp:convex_smooth} then it satisfies Assumption~\ref{asmp:convex_smooth_block} and $1 \leq \kappa^* \leq \kappa$.
\end{lemma}

\begin{proof}
Let $U$ satisfy Assumption~\ref{asmp:convex_smooth},
which implies that it is block-smooth with $L_m \leq L$ for all $m$.
Thus, taking $U_m = 0$ for $m=1, \ldots, M$ and $U_0 = U$, we have $\| \cdot \|_L \leq L \| \cdot \|$ and thus $U$ satisfies also Assumption \ref{asmp:convex_smooth_block} with $\lambda^* \geq \lambda/L$. 
\end{proof}

To give an intuition about the difference between $\kappa$ and $\kappa^*$, we mention the characterization of the extremal cases $\kappa=1$ and $\kappa^*=1$. Assume $d_1=\dots=d_M=1$ so that $M=d$. Then it is easy to prove the following.
\begin{enumerate}
\item The function $U$ satisfies Assumption~\ref{asmp:convex_smooth} with $\kappa=1$ if and only if $U$ is quadratic and isotropic, that is, $U(x) = \lambda \| x \|^2$ for some $\lambda > 0$.
\item The function $U$ satisfies Assumption~\ref{asmp:convex_smooth_block} with $\kappa^*=1$ if and only if $U$ is separable, that is, $U(x) = \sum_{m=1}^d U_m(x_m)$, and it is $\lambda$-convex for some $\lambda > 0$.
This follows by taking $U_0(x)=\lambda\|x\|^2/2$.
\end{enumerate}
The second case corresponds to $\pi$ which factorizes across blocks, which is when the GS converges the fastest (Lemma \ref{lm:lower_bound_KL}).

\begin{remark}[Checking the assumptions for $C^2$ potentials]\label{rmk:C2_asmp}
If $U$ is of class $C^2$, writing $\nabla^2 U$ for its Hessian and $A \leq B$ for symmetric matrices $A,B$ if $B-A$ has only non-negative eigenvalues, it is well known that Assumption~\ref{asmp:convex_smooth} corresponds to 
\begin{equation*}
\lambda \Id_d \leq \nabla^2 U(x) \leq L \Id_d\,,
\end{equation*}
for all $x \in \R^d$, where $\Id_d$ denotes the $d\times d$ identity matrix. 
On the other hand, if $U_0$ is of class $C^2$, part (i) of Assumption~\ref{asmp:convex_smooth_block} is equivalent to 
  \begin{equation*}
[\nabla^2 U_0(x)]_{mm} \leq L_m \Id_{d_m}\,,
\end{equation*}
for all $x\in \R^d$ and $m=1, \ldots, M$, where $[\nabla^2 U_0(x)]_{mm}$ denotes the $d_m \times d_m$ diagonal block of the matrix $\nabla^2 U_0(x)$;
 while part (ii) becomes equivalent to
 \begin{equation*} 
\lambda^* D \leq \nabla^2 U_0(x),
\end{equation*}
for all $x\in \R^d$, where $D$ denotes the diagonal matrix with diagonal coefficients $L_1, L_2, \ldots, L_M$, with each $L_m$ repeated $d_m$ times, that is, $D_{mm} = L_m \Id_{d_m}$.
Equivalently, part (ii) of Assumption~\ref{asmp:convex_smooth_block} 
imposes 
 $\lambda^*\Id_d\leq D^{-1/2}\nabla^2 U_0(x) D^{-1/2}$,
uniformly in $x \in \R^d$. 
\end{remark}

\begin{remark}[Separable potential with a quadratic interaction term]\label{rmk:sep_pot_quadr}
Following \cite{yuan2023class}, assume $M=2$ and $U(x_1,x_2)=V_1(x_1)+V_2(x_2)+\|x_1-x_2\|^2/(2h)$ for some $h>0$, with $V_1$ and $V_2$ strongly convex with parameters $\lambda_1$ and $\lambda_2$, respectively. Then, with the help of Remark \ref{rmk:C2_asmp}, we can check that $U$ satisfies Assumption \ref{asmp:convex_smooth_block} with  $U_0(x)=\|x_1-x_2\|^2/(2h)+\sum_{m=1}^2\lambda_m\|x_m\|^2/2$, 
$U_m(x_m)=V_m(x_m)-\lambda_m\|x_m\|^2/2$ for $m\in\{1,2\}$ and $\lambda^*=1-((1+h\lambda_1)(1+h\lambda_2))^{-1/2}$.
We will return to this example in Remark \ref{rmk:prox_sampl} below.
\end{remark}

\section{Main result: entropy contraction of the Gibbs sampler}
\label{sec:main}

We start with the analytical form of our main result: an inequality connecting marginal and joint entropies, which can be of independent interest. 

\begin{theorem}
\label{theo:functional_main}
Let $\pi$ satisfy Assumption~\ref{asmp:convex_smooth_block}.  
Then for any $\mu \in \sP(\R^d)$
\begin{equation}\label{eq:functional_main}
\frac{1}{M} \sum_{m=1}^M \KL(\mu_{-m} | \pi_{-m}) \leq \left( 1 - \frac{1}{\kappa^* M} \right) \KL(\mu | \pi). 
\end{equation}
\end{theorem}

From the chain rule~\eqref{eq:KL_chain_rule}, this inequality is equivalent to  
\begin{equation}
\label{eq:functional_main_alternative}
\sum_{m=1}^M  \E_{X_{-m} \sim \mu_{-m}} \left( \KL(\mu(\cdot | X_{-m} ) | \pi(\cdot | X_{-m} )) \right) \geq \frac{1}{\kappa^*} \KL(\mu|\pi)\,,
\end{equation}
which is also referred to as approximate tensorization of the entropy in  the literature, see e.g.\ \cite{caputo2023lecture} and references therein. 
This result is our main technical contribution. Its proof is postponed until Section~\ref{sec:proof_main}, whereas we now focus on the many implications of this inequality. We first show that it directly implies entropy contraction of GS.

\begin{theorem}
\label{theo:contraction_KL_main}
Let $\pi$ satisfy Assumption~\ref{asmp:convex_smooth_block}. Then for any $\mu \in \sP(\R^d)$
\begin{equation}\label{eq:contraction_KL_main}
\KL( \mu P^\mathrm{GS} | \pi ) \leq \left( 1 - \frac{1}{\kappa^* M} \right) \KL(\mu | \pi). 
\end{equation}
\end{theorem}

\begin{proof}
Using the definition of $P^{GS}$, the convexity of $\KL$ and the variational characterization of the kernels $P_m$ (Lemma~\ref{lm:variational_characterization_Pm}), we have
\begin{equation}\label{eq:funct_to_contr}
\KL( \mu P^\mathrm{GS} | \pi ) = \KL \left( \left. \frac{1}{M} \sum_{m=1}^M \mu P_m \right| \pi \right)  \leq \frac{1}{M} \sum_{m=1}^M \KL(\mu P_m | \pi) = \frac{1}{M} \sum_{m=1}^M \KL(\mu_{-m} | \pi_{-m}).    
\end{equation}
Then we conclude with Theorem~\ref{theo:functional_main}. 
\end{proof}
While closely related, \eqref{eq:functional_main} is stronger than \eqref{eq:contraction_KL_main}, since the convexity inequality in \eqref{eq:funct_to_contr} could be strict. With the vocabulary of \cite{caputo2025entropy}, the equation~\eqref{eq:functional_main} is the half-step contraction of the Markov chain, while~\eqref{eq:contraction_KL_main} is the full-step contraction, and at least for GS on discrete spaces the two may significantly differ \cite[Section 3.2]{caputo2025entropy}. As we will see below in Lemma~\ref{lm:lower_gaus}, on Gaussian targets both rates are tight up to constant.
In Section \ref{sec:non_strongly_convex} we will show how Theorem \ref{theo:contraction_KL_main} can be adapted to the non-strongly convex case, when $\lambda^*=0$. 

\begin{remark}
Regarding Remark~\ref{rmk:idp_parametrization}, note that our results are at least invariant with respect to \emph{isotropic linear} rescalings of coordinates. That is, take $t_1, \ldots, t_M \in \R$ strictly positive, let $\phi : (x_1, \ldots, x_M) \to (t_1 x_1, \ldots, t_M x_M)$, and define $\hat{\pi} = \phi_\# \pi$. Then we leave the reader to check that $\kappa^*$ is the same for $\pi$ and $\hat{\pi}$, so that the entropy contraction coefficient of Theorem~\ref{theo:contraction_KL_main} is the same for $\pi$ and $\hat{\pi}$. Similarly, it can be checked that, in Theorem~\ref{theo:non_strongly_convex} below, the quantity $R$ does not change if both $\pi$ and $\mu^{(0)}$ are rescaled by $\phi$. However it does not exclude that, for a given $\pi$, one may find a (possibly non-linear) transformation $\phi$ preserving the block structure, so that the upper bound given by a result such as Theorem~\ref{theo:contraction_KL_main} is much tighter for $\hat{\pi} = \phi_\# \pi$ than for $\pi$.
\end{remark}

\begin{remark}[Comparison with previous results on the proximal sampler]\label{rmk:prox_sampl}
\cite{yuan2023class} provides entropy contraction rates for potentials $U$ as in Remark \ref{rmk:sep_pot_quadr}. In that context,  
Assumption \ref{asmp:convex_smooth_block} holds with $\lambda^*=1-((1+h\lambda_1)(1+h\lambda_2))^{-1/2}$, and Theorems \ref{theo:functional_main}-\ref{theo:contraction_KL_main} provide results that are equivalent to \cite[Thm. 5]{yuan2023class} up to constants. 
The results in \cite{yuan2023class} are stated for deterministic-scan GS, while Theorem \ref{theo:contraction_KL_main} deals with the random-scan variant: nonetheless, when $M=2$ the two are closely related and the approximate tensorization of the entropy in Theorem \ref{theo:functional_main} is enough to control both variants, see e.g.\ \cite[Thm. 7.3]{ascolani2025mixing}.
More generally, the convergence of GS with $M=2$ and quadratic interaction terms has been previously studied by multiple authors in the context of Gaussian localization schemes \cite[Thm. 58]{chen2025localization} or proximal samplers \cite[Ch. 8]{chewi2023log}.
Such case is particularly tractable and amenable to various types of analyses, including direct Wasserstein contraction arguments, see Chapter 8 of \cite{chewi2023log} and references therein for an overview of available results.
\end{remark}

\subsection{Implications for mixing time}\label{sec:mix_times}
Uniform entropy contraction results, such as Theorem \ref{theo:contraction_KL_main}, provide strong control over the convergence speed of the associated Markov chain, as we now briefly discuss. 
First, recalling that $\mu^{(n+1)}=\mu^{(n)}P^\mathrm{GS}$, iterating \eqref{eq:contraction_KL_main} implies that
\begin{equation}\label{eq:basic_KL_mix_time_bound}
n\geq\kappa^*  M\log\left(\frac{\KL(\mu^{(0)}|\pi)}{\epsilon} \right)    
\end{equation}
is enough to ensure $\KL(\mu^{(n)}|\pi)\leq \epsilon$.
Thus to have a fully explicit mixing time, we need to bound $\KL(\mu^{(0)}|\pi)$. 
We report the resulting bounds in case of warm and feasible starts.
As feasible start, we consider two alternatives: the Gaussian one usually considered in the literature on log-concave sampling, and a factorized one that is natural in the GS context.

\begin{corollary}[Mixing time bounds - KL case]
\label{crl:GS_mix_time}
Let $\epsilon>0$.
\begin{enumerate}
    \item[(a)] \emph{(Warm start)}
If $\pi$ satisfies Assumption \ref{asmp:convex_smooth_block} and $\mu^{(0)}\in\sP(\R^d)$ is such that $\mu^{(0)}(A)\leq C\pi(A) \hbox{ for all }A\in \mathcal{B}(\R^d)$, we have 
$\KL(\mu^{(n)}|\pi) \leq \epsilon$ whenever
$$
n\geq\kappa^*  M\left[\log\left(\frac{1}{\epsilon} \right)+\log(\log(C)) \right]\,.
$$
    \item[(b)]  \emph{(Feasible start: Gaussian)} If $\pi$ satisfies Assumption \ref{asmp:convex_smooth}, $x^*$ is the unique mode of $\pi$, and $\mu^{(0)}=N(x^*, L^{-1}\Id_d)$ is a Gaussian distribution centered at $x^*$ and with covariance matrix $L^{-1}\Id_d$, we have 
$\KL(\mu^{(n)}|\pi) \leq \epsilon$ whenever
$$
n\geq\kappa  M\left[\log\left(\frac{1}{\epsilon} \right)+ \log \left( \frac{d}{2} \right) + \log (\log(\kappa)) \right]\,.
$$
    \item[(c)]  \emph{(Feasible start: factorized)} If $\pi$ satisfies Assumption \ref{asmp:convex_smooth}, $x^*$ is the unique mode of $\pi$, and
    $ \mu^{(0)}(\d x) = \bigotimes_{m=1}^M \pi(\d x_m|x^*_{-m} )$, we have 
$\KL(\mu^{(n)}|\pi) \leq \epsilon$ whenever
$$
n\geq\kappa  M\left[\log\left(\frac{1}{\epsilon} \right)+ \log(d) + 2 \log(\kappa) \right]\,.
$$
\end{enumerate}
\end{corollary}
\begin{proof}
Part (a) follows from \eqref{eq:basic_KL_mix_time_bound} and $\KL(\mu^{(0)}|\pi)\leq\sup_{x} \log \left( \frac{\d \mu^{(0)}}{\d \pi}(x) \right)\leq \log(C)$. Part (b) follows from part (a) and the fact that, under Assumption \ref{asmp:convex_smooth} and $\mu^{(0)}=N(x^*, L^{-1}\Id_d)$, one has $\mu^{(0)}(A)\leq C\pi(A)$ for all $A\in\mathcal{B}(\R^d)$ with $C=\kappa^{d/2}$, see e.g.\ \cite[Equation (12)]{D19}. Part (c) follows from~\eqref{eq:basic_KL_mix_time_bound} and the bound $\KL(\mu^{(0)}|\pi) \leq d \kappa^2$ which we prove in the \hyperref[appn]{Appendix}.
\end{proof}

The existence of warm starts is often assumed in the sampling literature, but finding one with C that does not grow exponentially with $d$ can be hard.
Thus, the double-logarithmic dependence on $C$ is convenient to avoid additional terms growing polynomially with $d$ in the case of a Gaussian feasible start. Whereas the Gaussian feasible start requires the knowledge of $L$, implementing the factorized feasible start requires the same ingredients as implementing the Gibbs sampler: after finding the mode $x^*$, one samples independently $X^{(0)}_m \sim \pi(\cdot | x^*_{-m})$ for $m=1, \ldots, M$. It results in a start which is in general not warm, but still enjoys a logarithmic scaling of the mixing time in the dimension.   

\begin{remark}[Extension to TV and $W_2$]\label{rmk:mix_TV_KL}
Thanks to classical functional inequalities, mixing times in KL can be directly extended to other metrics. Pinsker's inequality (see e.g.~\cite[Eq. (22.25)]{villani2009optimal}) relates the total variation ($\mathrm{TV}$) distance $\| \mu-\pi \|_\mathrm{TV}$ to $\KL$ via $\| \mu-\pi \|_\mathrm{TV}^2\leq \KL(\mu|\pi)/2$. Talagrand's inequality relates the Wasserstein distance $W_2(\mu,\pi)$ to $\KL$ via $W_2^2(\mu,\pi) \leq 2 \KL(\mu| \pi)/\lambda$ if $U$ is $\lambda$-convex with $\lambda>0$. Thus the mixing times of $P^{\mathrm{GS}}$ with respect to $\mathrm{TV}$ and $W_2$ have the same scaling as the ones in Corollary \ref{crl:GS_mix_time}, up to multiplicative constants.   
\end{remark}

\subsection{Related functional inequalities}
\label{sec:spectral_gap}

Theorem \ref{theo:functional_main} also implies a lower bound on the spectral gap of $P^\mathrm{GS}$. The spectral gap of a $\pi$-reversible kernel $P$ is defined as
\begin{equation}\label{eq:gap}
\gap(P) = \inf_{f \in L^2(\pi)} \frac{\langle f, (\Id-P)f \rangle_{\pi}}{\mathrm{Var}_\pi(f)},   
\end{equation}
where $\langle f, g \rangle_{\pi} = \int f(x)g(x)\pi(\d x)$ and $\mathrm{Var}_\pi(f) = \int f^2(x) \pi(\d x) - \left(\int f(x) \pi(\d x) \right)^2$, with $f,g \in L^2(\pi)$. 
We then have the following result.
\begin{corollary}
\label{crl:variance_inequality}
Let $\pi$ satisfy Assumption~\ref{asmp:convex_smooth_block}.
Then $\gap(P^\mathrm{GS}) \geq 1/(\kappa^* M)$. Equivalently, if $X \sim \pi$ and $f \in L^2(\pi)$ it holds
\begin{equation}\label{eq:variance_inequality} 
\sum_{m=1}^M \E( \mathrm{Var}(f(X) | X_{-m} ) ) \geq \frac{1}{\kappa^*}
\mathrm{Var}(f(X)).
\end{equation}
\end{corollary}
\begin{proof}
The bound~\eqref{eq:variance_inequality} follows directly from the approximate tensorization of the entropy in~\eqref{eq:functional_main_alternative} by linearization, specifically using $\mu = (1+ \varepsilon \tilde{f}) \cdot \pi $ with $\varepsilon \to 0$, for $\tilde{f} = f - \E(f(X))$. The argument is well-known, see e.g. \cite[Prop. 1.1]{caputo2015approximate}. Once we have~\eqref{eq:variance_inequality} the result on the gap follows as
Lemma~\ref{lm:interpretation_Pm} implies $\langle f, (\Id-P_m) f \rangle_{\pi} = \E( \mathrm{Var}(f(X) | X_{-m} ) )$.
Thus, by the definition of $P^\mathrm{GS}$ in \eqref{eq:def_GS}, we obtain
\begin{equation}
\label{eq:expression_Dir_PGS}
\langle f, (\Id-P^\mathrm{GS}) f \rangle_{\pi} = \frac{1}{M} \sum_{m=1}^M \E( \mathrm{Var}(f(X) | X_{_m} ) ),
\end{equation}
so that $\gap(P^\mathrm{GS}) \geq 1/( \kappa^* M)$ is equivalent to~\eqref{eq:variance_inequality}. 
\end{proof}
The inequality \eqref{eq:variance_inequality} provides a lower bound on the conditional variances of functions in $L^2(\pi)$ in terms of their marginal variance.
To the best of our knowledge, it did not appear previously in the literature, and we have not been able to provide a direct proof of it using classical functional inequalities without passing through relative entropy.

In addition it is well understood that~\eqref{eq:functional_main} is equivalent to a Brascamp-Lieb inequality for the measure $\pi$. In our case it takes the following form.
\begin{corollary}
Let $\pi$ satisfy Assumption~\ref{asmp:convex_smooth_block}, and let $f_1, \ldots, f_M$ measurable non-negative functions defined on $\sX_{-1}, \ldots, \sX_{-M}$ respectively. Then, with $\theta = M - 1/\kappa^*$, 
\begin{equation}
\label{eq:BL}
\E_\pi \left[ \prod_{m=1}^M f_m(X_{-m}) \right] \leq \prod_{m=1}^M \E_\pi\left[ f_m(X_{-m})^{\theta} \right]^{1/\theta}.
\end{equation}
\end{corollary}
\begin{proof}
The inequality~\eqref{eq:BL} is equivalent to~\eqref{eq:functional_main} and follows by Legendre duality, see e.g. \cite[Theorem 2.1]{carlen2009subadditivity} or \cite[Theorem 1]{caputo2024entropy}.
\end{proof}

\subsection{Tightness of the entropy contraction rate}
\label{subsec:tightness}

We conclude this section by discussing the tightness of our rate.
Out of clarity we introduce some constants capturing the worst possible case. For any $\pi$-reversible kernel $P$ we define the entropy contraction constant as
\begin{equation*}
\rho_\mathrm{EC}(P):=\sup_{\mu\in\sP(\R^d)}\frac{\KL( \mu P | \pi ) }{\KL( \mu| \pi ) },
\end{equation*}
where the supremum is restricted to measures $\mu\in\sP(\R^d)$ such that $\KL( \mu | \pi )<\infty$. We also write $C_\mathrm{ATE}(\pi)$ for the optimal constant in the approximate tensorization of the entropy, that is the smallest $C \geq 0$ such that 
\begin{equation*}
C\,\sum_{m=1}^M  \E_{X_{-m} \sim \mu_{-m}} \left( \KL(\mu(\cdot | X_{-m} ) | \pi(\cdot | X_{-m} )) \right) \geq \KL(\mu|\pi),
\end{equation*}
holds for all measures $\mu \in \sP(\R^d)$ with $\KL( \mu | \pi )<\infty$. Written like this,~\eqref{eq:functional_main_alternative} is equivalent to $C_\mathrm{ATE}(\pi) \leq \kappa^*$. Recalling that the spectral gap is defined in~\eqref{eq:gap}, we have, in the context of the Gibbs sampling, the following comparisons:
\begin{align}
\label{eq:link_EC_ATE}
\rho_\mathrm{EC}(P^\mathrm{GS}) & \leq 1 - \frac{1}{M C_\mathrm{ATE}(\pi)}, \\
\label{eq:link_G_ATE}
\gap(P^\mathrm{GS}) & \geq \frac{1}{M C_\mathrm{ATE}(\pi)},
\end{align}
the first one being a rewriting of the proof of Theorem~\ref{theo:contraction_KL_main} from Theorem~\ref{theo:functional_main}, while the second one follows from the proof of Corollary~\ref{crl:variance_inequality}. Thus the estimate $C_\mathrm{ATE}(\pi) \leq \kappa^*$ from Theorem~\ref{theo:functional_main} implies $\rho_\mathrm{EC}(P^\mathrm{GS}) \leq 1 - 1/(\kappa^* M)$ and $\gap(P^\mathrm{GS}) \geq 1/(\kappa^* M)$ as we already stated. 

We also recall the following well-known connection between entropy contraction coefficients and spectral gaps which follows by linearization (e.g. \cite{caputo2023lecture}, below Lemma 2.15): for any $\pi$-reversible kernel $P$ we have
\begin{equation}\label{eq:gap_and_entropies}
\rho_\mathrm{EC}(P) \geq 1-2\gap(P)\,.
\end{equation}

We first recall the following result: as one iteration of GS changes only a proportion $1/M$ of the coordinates, it yields universal bounds which hold for any target distribution, even without log-concavity. 

\begin{lemma}
\label{lm:lower_bound_KL}
For any $\pi \in \sP(\R^d)$, we have 
\begin{equation*}
\gap(P^\mathrm{GS}) \leq \frac{1}{M}, \qquad C_\mathrm{ATE}(\pi) \geq 1, \qquad \rho_{\mathrm{EC}}(P^\mathrm{GS}) \geq 1 - \frac{2}{M};
\end{equation*}
and the first two inequalities are equalities if and only if $\pi$ factorizes over $\sX_1 \times \ldots \times \sX_M$.
\end{lemma}

\begin{proof}
For $C_\mathrm{ATE}$, let $\mu\in\sP(\R^d)$ admit a density $f$ with respect to $\pi$, with $f$ depending only on one coordinate $x_m \in \sX_m$. 
In this case we have  for $m' \neq m$
\[
\KL(\mu \mid \pi) = \int_{\sX_m}f(x_m) \log(f(x_m))\pi_m(\d x_m) = \KL(\mu_{-m'} \mid \pi_{-m'})\,,
\]
where $\pi_m$ denotes the marginal distribution of $X_m$ under $X\sim\pi$. 
Thus by the chain rule~\eqref{eq:KL_chain_rule}, we obtain 
\begin{align*}
\E_{X_{-m'} \sim \mu_{-m'}} \left( \KL(\mu(\cdot | X_{-m'} ) | \pi(\cdot | X_{-m'} )) \right) & =  \KL(\mu \mid \pi)- \KL(\mu_{-m'} \mid \pi_{-m'}) \\
& =
\begin{cases}
0 & \text{if } m' \neq m, \\
\leq \KL(\mu | \pi) & \text{if } m = m'. 
\end{cases} 
\end{align*}
Summing these inequalities for $m' = 1, \ldots, M$, we see that necessarily $C_\mathrm{ATE}(\pi) \geq 1$. Moreover, if there is the equality $C_\mathrm{ATE}(\pi) = 1$ then necessarily $\KL(\mu_{-m} | \pi_{-m}) = 0$ for $\mu \in \sP(\R^d)$ having a density $f$ depending only on $x_m \in \sX_m$, meaning $\mu_{-m} = \pi_{-m}$ in this case. As the density of $\mu_{-m}$ with respect to $\pi_{-m}$ is $P_m f$, it implies $P_m f = 1$ for all $f = f(x_m)$ non-negative which integrate to $1$. By linearity it implies $\E_\pi[f(X_m)|X_{-m}] = \E_\pi[f(X_m)]$ a.s. for all $f$ measurable and bounded, which implies that $X_m$ is independent from $X_{-m}$. As $m$ is arbitrary in this reasoning, it implies that $\pi$ factorizes. 

For the gap, using also a test function $f$ depending only on $x_m$ in the definition~\eqref{eq:gap}, together with~\eqref{eq:expression_Dir_PGS}, provides $\gap(P^\mathrm{GS}) \leq 1/M$. If $\gap(P^\mathrm{GS}) = 1/M$, then with the same test function we see that $\E( \mathrm{Var}(f(X_m) | X_{-m} ) ) = 0$ for all $m$ and all bounded function $f$ defined on $\sX_m$ with zero mean, which again implies that $\pi$ factorizes.

The inequality on $\rho_\mathrm{EC}(P^\mathrm{GS}) \geq 1-2/M$ follows from the one of the gap and~\eqref{eq:gap_and_entropies}.

Eventually, if $\pi$ factorizes over $\sX_1 \times \ldots \times \sX_M$, then the so-called tensorization of the entropy (see e.g.~\cite[Proposition 1.4.1]{ane2000inegalites} or~\cite[Lemma 22.8]{villani2009optimal}) precisely states that $C_\mathrm{ATE}(\pi) = 1$. The equality $\gap(P^\mathrm{GS}) = 1/M$ follows by $\gap(P^\mathrm{GS}) \leq 1/M$ and~\eqref{eq:link_G_ATE}.
\end{proof}

To illustrate the tightness of our bounds, we can consider the Gaussian case. Indeed, in this case, the following lemma implies that Theorem~\ref{theo:functional_main} is tight, and the upper bound $\rho_\mathrm{EC}(P^\mathrm{GS})\leq 1-1/(d\kappa^*)$ of Theorem \ref{theo:contraction_KL_main} is tight, up to a multiplicative factor of $2$. 
\begin{lemma}\label{lm:lower_gaus}
    Let $M = d$ and $\pi = N(x^*,Q^{-1})$ be a Gaussian distribution with precision matrix $Q$. 
   Then $\kappa^*=1/\lambda_\mathrm{min}(D^{-1/2}QD^{-1/2})$, where $D$ denotes the diagonal matrix with diagonal terms equal to $Q$.
   Moreover, 
    \begin{equation*}
    \gap(P^\mathrm{GS}) = \frac{1}{d \kappa^*}, \qquad C_\mathrm{ATE}(\pi) = \kappa^*, \qquad \rho_\mathrm{EC}(P^\mathrm{GS}) \in \left[ 1 - \frac{2}{\kappa^* d}, 1 - \frac{1}{\kappa^* d} \right].
    \end{equation*}
\end{lemma}
\begin{proof}
The expression of $\kappa^*$ follows from Remark \ref{rmk:C2_asmp}. The computation of the spectral gap can be found in \cite[Theorem 1]{amit1996convergence}. As in our case $d= M$, then $C_\mathrm{ATE}(\pi) \leq \kappa^*$ by~\eqref{eq:functional_main_alternative} while $C_\mathrm{ATE}(\pi) \geq \kappa^*$ follows from~\eqref{eq:link_G_ATE}.
The upper bound for $\rho_\mathrm{EC}(P^\mathrm{GS})$ is Theorem~\ref{theo:contraction_KL_main}, while the lower bound follows from~\eqref{eq:gap_and_entropies}.
\end{proof}

The recent work \cite[Theorem 3]{caputo2024entropy} also derives the explicit result $C_\mathrm{ATE}(\pi) = \kappa^*$ when $U = x^\top (\mathrm{Id} - S) x /2$, with $S$ a symmetric substochastic matrix. Our results thus extend theirs to the whole class of Gaussian measures.

\section{Computational considerations}
\label{section:computational_cost}

In this section we first discuss how to implement GS for log-concave distributions, and then move to the main computational implications of Theorem \ref{theo:contraction_KL_main} and connections with the optimization literature. 
Even if our results apply to the more general Assumption \ref{asmp:convex_smooth_block}, here we mostly focus on Assumption \ref{asmp:convex_smooth} to simplify the discussion and to facilitate the comparison with other MCMC schemes.

\subsection{Implementation of GS}\label{sec:implementation}

In order to implement Algorithm \ref{alg:rs-gs}, exact sampling from the full conditionals of $\pi$, i.e.\ $\pi(\cdot|x_{-m})$ for an arbitrary $x_{-m}\in\sX_{-m}$, is required. 
Here we focus on the case where $\pi$ is a generic strongly log-concave distribution satisfying Assumption \ref{asmp:convex_smooth}, even if we note that in many applications $\pi(\cdot|x_{-m})$ belongs to some known class of analytically tractable distributions (i.e.\ the model is ``conditionally-conjugate'' \cite{C92}), so that sampling from $\pi(\cdot|x_{-m})$ can be accomplished with distribution specific algorithms. 
If $\pi$ satisfies Assumption \ref{asmp:convex_smooth}, then so does $\pi(\cdot|x_{-m})$ with the same condition number $\kappa$, since log-concavity is preserved under conditioning.
Sampling from low-dimensional log-concave distributions can be accomplished with (adaptive) Rejection Sampling (RS) algorithms \citep{gilks1992adaptive}. 
This is, for example, the default option implemented in generic software implementations of GS \citep{lunn2009bugs,JAGS}. 
In particular, \cite{chewi2022query} has recently analyzed RS algorithms for one-dimensional log-concave distributions (i.e. $d_m = 1$ in our context) showing that, on average, they only require $\sO(\log(\kappa))$ evaluations of the conditional density (or $\sO(\log(\log(\kappa)))$ if one has access to the conditional mode) for each sample from $\pi(\cdot|x_{-m})$.  
This is in agreement with empirical observations reporting that adaptive RS usually requires only a few (e.g.\ less than $5$) full conditional evaluations to produce a sample from $\pi(\cdot|x_{-m})$ \citep[Table 1]{gilks1992adaptive}.
Thus, in the following we assume that, when $d_m=1$, exact draws from $\pi(\cdot|x_{-m})$ can be obtained with $\sO(\log (\kappa))$ evaluations of the associated full conditional density.

\subsection{Comparison with gradient-based MCMC schemes}\label{sec:comparison_gradient}

To compare GS with alternative MCMC schemes, most notably gradient-based ones, we work under the assumption that evaluating a single full conditional is $M$-times cheaper than evaluating the full likelihood. Intuitively, it captures the class of problems that are computationally amenable to coordinate-wise methods such as GS. 

\begin{asmp}\label{ass:comp_cost}
Given two arbitrary points $x, y \in \R^d$, the computational cost of evaluating $U(y_m,x_{-m})-U(x)$ (averaged over $m=1,\dots,M$) is $M$-times smaller than the one of computing $U(y)-U(x)$ or $\nabla U(x)$.
\end{asmp}

In the optimization literature on coordinate descent methods, Assumption \ref{ass:comp_cost} is usually equivalently stated by saying that computing a partial derivative of $U$ is $M$-times cheaper than evaluating the full gradient $\nabla U$. This is an intuitive property that holds for many, but not all, models (see e.g.\ \cite{nesterov2012efficiency, RT14,W15} for detailed discussions and examples). 
As a common and relevant example, we mention the case where
\begin{equation}\label{def:simple_U}
    U(x) = \sum_{i = 1}^df_i(x_i)+f(Ax),
\end{equation}
where $A \in \mathbb{R}^{r\times d}$ for some integer $r>0$ and $f, f_i$, with $i = 1,\dots, d$, are smooth functions (which require $\sO(1)$ cost to be evaluated for a given input). By keeping track of $Ax$ during iterations, it is clear that computing a new value of $U$ after changing only $d_m$ coordinates, i.e.\ computing $U(y_m,x_{-m})-U(x)$ with $y_m\in\sX_m$, requires $\sO(rd_m)$ operations while computing  $U(y)-U(x)$ for a general $y\in\R^d$ requires $\sO(rd)$ operations.
Thus Assumption \ref{ass:comp_cost} is satisfied under \eqref{def:simple_U}. 
For example, target distributions as in \eqref{def:simple_U} arise from generalised linear models (e.g.\ linear or logistic regression) with independent priors on the regression coefficients $x$ and $r$ observations (see e.g.\ \cite[Ch.16]{gelman2013bayesian}  and the recent work \cite{Luu25}, which also provides implementations where Assumption~\ref{ass:comp_cost} holds for some generalised linear models).

Taking $d_m=1$ for all $m$ for simplicity, Corollary \ref{crl:GS_mix_time}, Remark \ref{rmk:mix_TV_KL} and the discussion in Section \ref{sec:implementation} imply that, ignoring logarithmic terms in $\kappa$ and $d$, GS requires $\sO\left(\kappa d\log(1/\epsilon)\right)$ full-conditional evaluations 
to obtain a sample from the target $\pi$ up to an error $\epsilon$ in TV (or some other) distance, both under a warm and a feasible start. 
Thus, under Assumptions \ref{asmp:convex_smooth} and \ref{ass:comp_cost}, GS produces an $\epsilon$-accurate sample with a computational cost of order
$$
\sO\left(\kappa \log(1/\epsilon)\right)
$$
evaluations of the target $U$ (or equivalently $\nabla U$). 
Note that here we are using the bound $\kappa^*\leq \kappa$ for simplicity, ignoring the fact that $\kappa^*$ could be potentially much smaller than $\kappa$. In Table~\ref{table:computational_cost} we compare this cost with the one of three alternative MCMC methods: Random Walk Metropolis (RWM), 
the Metropolis-Adjusted-Langevin Algorithm (MALA)
and Hamiltonian Monte Carlo (HMC), using results from \cite{AL22} for RWM, \cite{wu2022minimax} for MALA and \cite{chen2023does,chen2020fast} for HMC.
In the interest of brevity, we do not include unadjusted MCMC schemes (such as unadjusted overdamped and underdamped Langevin algorithms, see e.g.\ \cite{chewi2023log} and references therein) in the comparison; we just mention that the considerations below apply also to those schemes.
\begin{table}[h!]
\caption{Available upper bounds on the computational cost (measured in terms of total number of target evaluations, either of $U$ or $\nabla U$) required to sample from a $d$-dimensional strongly log-concave distribution with condition number $\kappa$ for different MCMC schemes, ignoring logarithmic terms and the dependence on $\epsilon$. For GS, we employ Assumption \ref{ass:comp_cost}. 
For HMC, the warm start bound requires additional regularity assumptions on $\pi$, see \cite{chen2023does}.
Warm and feasible starts are defined as in part (a) and (b) of Corollary \ref{crl:GS_mix_time} above. References for all the entries of the table are given in Section \ref{sec:comparison_gradient}.}
\begin{tabular}{||c c c ||} 
 \hline
 Method & Warm start & Feasible start \rule{0pt}{2.6ex} \\ [0.5ex] 
 \hline\hline
 RWM & $\sO(\kappa d)$ & $\sO(\kappa d)$ \rule{0pt}{2.6ex} \rule[-0.9ex]{0pt}{0pt} \\
 \hline 
 MALA & $\sO(\kappa \sqrt{d})$  & $\sO(\kappa d)$ \rule{0pt}{2.6ex} \rule[-0.9ex]{0pt}{0pt} \\
 \hline
 HMC & $\sO(\kappa d^{\frac{1}{4}})$ & $\sO(\kappa d^{\frac{11}{12}})$ \rule{0pt}{2.6ex} \rule[-0.9ex]{0pt}{0pt}  \\
 \hline
 GS & $\sO(\kappa)$ & $\sO(\kappa)$ \rule{0pt}{2.6ex} \rule[-0.9ex]{0pt}{0pt} \\  
 \hline
\end{tabular}
\label{table:computational_cost}
\end{table}

According to the results reported in Table \ref{table:computational_cost}, GS outperforms competing schemes therein. 
In particular, to the best of our knowledge, GS is the only scheme that, under Assumptions \ref{asmp:convex_smooth} and \ref{ass:comp_cost}, produces an $\epsilon$-accurate sample with a dimension-free number of target evaluations. 
On the other hand, the linear dependence on $\kappa$ is tight for GS even in the Gaussian case (see Lemma~\ref{lm:lower_gaus}), while it is sub-optimal for HMC in that case: indeed \cite{apers2022hamiltonian} prove that, in the specific case of Gaussian targets, HMC requires only $\sO\left(\sqrt{\kappa}d^{1/4}\right)$ gradient evaluations for each sample. 
While more research is needed to get a full picture, these results are in agreement with empirical observations that report HMC being more robust than GS to bad conditioning (arising from e.g.\ bad parametrizations with strongly correlated parameters) and, on the contrary, GS being more efficient in high-dimensional scenarios with good parametrizations \cite{papaspiliopoulos2023scalable,Luu25}.

\subsection{Comparison with optimization: similarities and differences}\label{sec:comparison_optimization}

Coordinate descent methods, which minimize a (convex) function $U$ on $\R^d$ by iteratively optimizing it with respect to different subsets of variables, received considerable attention in the last decade, starting from the pioneering paper by \cite{nesterov2012efficiency} with several subsequent works (see e.g.\ \cite{beck2013convergence, RT14,W15}). They are known to be state-of-the-art methods for some classes of problems, both in theory and practice (e.g. fitting generalized linear models with Lasso penalty, \cite{friedman2010regularization}). 
More broadly, coordinate-wise methods are competitive when computing a partial derivative of $U$ is much cheaper than computing its full gradient (i.e.\  Assumption \ref{ass:comp_cost} above). Within this perspective, the results of this paper provide a close analogue to the ones obtained in the optimization literature: indeed the rate we prove in Theorem \ref{theo:contraction_KL_main} exactly matches the ones available for random-scan coordinate descent methods (see e.g.\ Theorem $2$ in \cite{nesterov2012efficiency} with $\alpha=0$).

On the other hand, in the convex optimization case, the potential gain of coordinate descent relative to standard gradient descent is only given by the dependence on a better condition number, i.e.\ the replacement of $\kappa$ with $\kappa^* \leq \kappa$ in the convergence rate. Instead, in the sampling context, beyond the replacement of $\kappa$ with $\kappa^*$ in Theorem \ref{theo:contraction_KL_main}, there is an additional gain that directly depends on $d$.
Indeed, when applied to sampling, common SDE time-discretization techniques (such as the Euler-Maruyama one) introduce a bias in the invariant distribution of the induced Markov chain. 
This bias is usually either controlled by taking a small step-size (which decreases with $d$) or removed by adding a Metropolis-Hastings accept-reject step (which, however, also requires the step-size to decrease with $d$ to control the acceptance rate). In both cases, which are often referred to as \emph{unadjusted} or \emph{Metropolis-adjusted}, the resulting gradient-based MCMC algorithm requires a number of target evaluations for each $\epsilon$-accurate sample that grows with $d$ (see Table \ref{table:computational_cost}). By contrast, GS does not introduce any discretization error in its invariant distribution and, under Assumption \ref{ass:comp_cost}, it does not pay any dimensionality price (beyond the potential implicit dependence of $\kappa^*$ on $d$). 

\section{Proof of the main result}
\label{sec:proof_main}

We now turn into the proof of Theorem~\ref{theo:functional_main}. 
For a map $T : \R^d \to \R^d$, we write $T_m(x)$ and $T_{-m}(x)$ for respectively the component on $\sX_m$ and on $\sX_{-m}$ of $T(x)$. On the other hand, we use superscripts $(T^m)_{m=1,\dots,M}$ in order to index a family of maps by the parameter $m$. Recall that we use the same symbol to denote a measure and its density with respect to the Lebesgue measure. 

\subsection{Proof strategy}

We will find a collection of maps, $\hat{T}^m : \R^d \to \R^d$ for $m=1,\dots,M$, such that $\hat{T}^m_{-m}(x)=x_{-m}$ (that is, $\hat{T}^m$ only changes the $m$-th coordinate of $x$) and such that
\begin{equation}
\label{eq:to_prove}
\frac{1}{M} \sum_{m=1}^M \KL\left( \left. (\hat{T}^m)_\# \mu \right| \pi \right) \leq \left( 1 - \frac{1}{\kappa^* M} \right) \KL(\mu | \pi).   
\end{equation}
Since $(\hat{T}^m)_\# \mu$ has the same marginals as $\mu$ on $\sX_{-m}$, by the variational characterization of Lemma~\ref{lm:variational_characterization_Pm} we have $\KL((\hat{T}^m)_\# \mu|\pi) \geq \KL(\mu_{-m}|\pi_{-m})$ and it follows that~\eqref{eq:to_prove} implies the desired result.

For $\mu \in \sP(\R^d)$ with $\KL(\mu|\pi) < + \infty$, we can decompose $\KL(\mu | \pi) = \U(\mu) + \mathcal{H}(\mu)$ into its potential and entropy part defined as
\begin{equation*}
\U(\mu) = \int_{\R^d} U(x) \,   \mu(\d x), \qquad 
\mathcal{H}(\mu) =
\int_{\R^d} \log  \mu(x)  \,  \mu(\d x)\,.
\end{equation*}
We will prove a stronger version than \eqref{eq:to_prove}, namely we will find maps $(\hat{T}^m)_{m=1,\dots,M}$ such that
\begin{align*}
\frac{1}{M} \sum_{m=1}^M \U \left(  (\hat{T}^m)_\# \mu \right) & \leq \left( 1 - \frac{1}{\kappa^* M} \right)\U(\mu) + \frac{1}{\kappa^* M} \U(\pi), \\
\frac{1}{M} \sum_{m=1}^M \sH \left(  (\hat{T}^m)_\# \mu \right) & \leq \left( 1 - \frac{1}{\kappa^* M} \right)\mathcal{H}(\mu) + \frac{1}{\kappa^* M} \mathcal{H}(\pi).
\end{align*}
Summing the two inequalities and using $\U(\pi) + \mathcal{H}(\pi) = \KL(\pi|\pi) = 0$ yields~\eqref{eq:to_prove} and thus the conclusion.
While the potential part $\U$ is relatively easy to handle and is an adaptation of arguments in the literature on coordinate-wise optimization \cite{RT14,W15}, the entropy part $\sH$ is more delicate and will rely on a particular choice of maps $(\hat{T}^m)_{m=1,\dots,M}$.

\subsection{The potential part}

We start with some implications of Assumption~\ref{asmp:convex_smooth_block} about how the potential function $U$ changes when updating one coordinate at the time. Recall that $U$ decomposes as $U(x) = U_0(x) + \sum_{m=1}^M U_m(x_m)$, and that $U_0$ is block-smooth and $\lambda^*$-convex. Specifically we recall, see~\cite[Eq. (6)]{RT14}, that the block-smoothness of $U_0$ implies for any $x,y$
\begin{equation*}
U_0(y_m,x_{-m}) \leq U_0(x) + \nabla_{m} U_0(x)^\top (y_m - x_m) + \frac{L_m}{2} \| y_m - x_m \|^2.
\end{equation*}
On the other hand the $\lambda^*$-convexity under the metric $\| x \|_L^2 = \sum_{m=1}^M L_m \| x_m \|^2$ implies for any $x,y$, 
\begin{equation*}
U_0(y) \geq U_0(x) + \nabla U_0(x)^\top (y - x) + \frac{\lambda^*}{2} \| y - x \|_L^2,
\end{equation*}
see~\cite[Eq. (9)]{RT14}. It also implies that if $y = (1-t)x + t\bar{x}$ then 
\begin{equation*}
U_0(y) \leq (1-t) U_0(x) + t U_0(\bar{x}) - \frac{\lambda^* t(1-t)}{2} \| x - \bar{x} \|_L^2,
\end{equation*}
see~\cite[Eq. (11)]{RT14}. Armed with these three inequalities we can deduce the following bound, which is inspired by arguments previously used to analyze convergence of coordinate-wise optimization methods ~\cite[Theorem 4]{W15}; see also similar results in a previous work~\cite{LZ2024CAVI} by the last two authors.

\begin{lemma}
\label{lm:potential_U_key}
Under Assumption~\ref{asmp:convex_smooth_block}, let $x, \bar{x}\in\R^d$ and $y = (1-\lambda^*) x + \lambda^* \bar{x}$. Then
\begin{equation}
\label{eq:potential_U_key}
\frac{1}{M} \sum_{m=1}^M U(y_m, x_{-m}) \leq \left( 1 - \frac{1}{M\kappa^*} \right) U(x) + \frac{1}{M \kappa^*} U(\bar{x}).
\end{equation}
\end{lemma}

\begin{proof}
We write $U^{(s)}(x) = \sum_{m=1}^M U_m(x_m)$ for the separable part, so that $U = U_0 + U^{(s)}$. 
We first start with a computation valid for any $x,y$: using block-smoothness and then convexity of $U_0$ we have
\begin{align*}
\frac{1}{M} \sum_{m=1}^M U_0(y_m, x_{-m}) & \leq \frac{1}{M} \sum_{m=1}^M \left( U_0(x) + \nabla_{m} U_0(x)^\top (y_m - x_m) + \frac{L_m}{2} \|y_m - x_m \|^2 \right) \\
& = \frac{M-1}{M} U_0(x) + \frac{1}{M} \left( U_0(x) + \nabla U_0(x)^\top (y-x) + \frac{1}{2} \| y-x \|_L^2 \right) \\
& \leq \frac{M-1}{M} U_0(x) + \frac{1}{M} \left( U_0(y) + \frac{1 - \lambda^*}{2} \| y-x \|_L^2 \right). 
\end{align*}
On the other hand for the separable part we have the algebraic identity:
\begin{align}
\notag
\frac{1}{M} \sum_{m=1}^M U^{(s)}(y_m, x_{-m}) & = \frac{1}{M} \sum_{m=1}^M \left( U_m(y_m) + \sum_{m'\neq m} U_{m'}(x_{m'}) \right) \\
\label{eq:algebra_separable}
& = \frac{M-1}{M} \sum_{m=1}^M U_m(x_m) + \frac{1}{M} \sum_{m=1}^M U_m(y_m) = \frac{M-1}{M} U^{(s)}(x) + \frac{1}{M} U^{(s)}(y).
\end{align}
Summing with the part $U_0$, we get
\begin{equation}
\label{eq:aux_potential_without_cancellations}
\frac{1}{M} \sum_{m=1}^M U(y_m, x_{-m}) \leq \frac{M-1}{M} U(x) + \frac{1}{M} \left( U(y) + \frac{1 - \lambda^*}{2} \| y-x \|_L^2 \right).
\end{equation}
At this point, as $U^{(s)}$ is convex while $U_0$ is $\lambda^*$-convex, we have that $U = U_0 + U^{(s)}$ is $\lambda^*$-convex. Thus, using $y = (1-\lambda^*) x + \lambda^* \bar{x}$ so that $\| x-y \|_L = \lambda^* \| x - \bar{x} \|_L$, we have 
\begin{align*}
\frac{1}{M} & \sum_{m=1}^M U(y_m, x_{-m})   \leq \frac{M-1}{M} U(x) + \frac{1}{M} \left( U(y) + \frac{1-\lambda^*}{2} \| y-x \|_L^2 \right) \\
 & \leq \frac{M-1}{M} U(x) + \frac{1}{M} \left( (1-\lambda^*)U(x) + \lambda^* U(\bar{x}) - \frac{ \lambda^* (1-\lambda^*) \lambda^*}{2}\| x - \bar{x} \|_L^2 + \frac{{\lambda^*}^2 (1-\lambda^*)}{2}  \| x-\bar{x} \|_L^2 \right).
\end{align*}
The terms with the distance cancel, and we conclude with $\kappa^* = 1/\lambda^*$.
\end{proof}

Lemma \ref{lm:potential_U_key} is directly lifted to the integrated potential energy $\U$.

\begin{prop}
\label{prop:conclusion_estimate_U}
Under Assumption~\ref{asmp:convex_smooth_block}, let $\mu\in\sP(\R^d)$ and $T : \R^d \to \R^d$ be a measurable map. For $m=1, \ldots, M$ define $\hat{T}^m : \R^d \to \R^d$ by $\hat{T}^m_{-m}(x) = x_{-m}$ and $\hat{T}^m_{m}(x) = (1-\lambda^*) x_m + \lambda^* T_m(x)$. Then 
\begin{equation*}
\frac{1}{M} \sum_{m=1}^M \U \left(  (\hat{T}^m)_\# \mu \right) \leq \left( 1 - \frac{1}{\kappa^* M} \right)\U(\mu) + \frac{1}{\kappa^* M} \U(T_\# \mu).
\end{equation*}
\end{prop}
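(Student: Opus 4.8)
The plan is to integrate the pointwise bound of Lemma~\ref{lm:potential_U_key} against the measure $\mu$ and recognize each of the resulting integral terms. First I would observe that $\U$ is by definition linear in its argument in the sense that for any measurable $\phi$, $\U(\phi_\# \mu) = \int_{\R^d} U(\phi(x)) \, \mu(\d x)$. Applying this to $\phi = \hat{T}^m$ and using $\hat{T}^m(x) = (y_m, x_{-m})$ with $y_m = (1-\lambda^*)x_m + \lambda^* T_m(x)$, we get
\begin{equation*}
\U\left( (\hat{T}^m)_\# \mu \right) = \int_{\R^d} U\left( (1-\lambda^*) x_m + \lambda^* T_m(x), \, x_{-m} \right) \mu(\d x).
\end{equation*}

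Next I would average over $m$ and apply Lemma~\ref{lm:potential_U_key} \emph{inside the integral}, with the choices $x = x$ (the integration variable) and $\bar{x} = T(x)$. Indeed, for each fixed $x$, setting $y = (1-\lambda^*)x + \lambda^* T(x)$, Lemma~\ref{lm:potential_U_key} gives
\begin{equation*}
\frac{1}{M} \sum_{m=1}^M U\left( y_m, x_{-m} \right) \leq \left( 1 - \frac{1}{M\kappa^*} \right) U(x) + \frac{1}{M\kappa^*} U(T(x)),
\end{equation*}
and crucially $y_m = (1-\lambda^*) x_m + \lambda^* T_m(x) = \hat{T}^m_m(x)$, so the left-hand side is exactly $\frac{1}{M}\sum_m U(\hat{T}^m(x))$. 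Integrating both sides against $\mu(\d x)$ and using linearity of the integral, the left side becomes $\frac{1}{M}\sum_m \U\big((\hat{T}^m)_\#\mu\big)$, the first term on the right becomes $\left(1 - \frac{1}{\kappa^* M}\right)\U(\mu)$, and the second becomes $\frac{1}{\kappa^* M}\int U(T(x))\,\mu(\d x) = \frac{1}{\kappa^* M}\U(T_\#\mu)$, which is the claim.

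There is essentially no serious obstacle here; the proposition is a direct "lifting" of the pointwise estimate, and the only points requiring a modicum of care are measurability (so that the pushforwards and integrals are well-defined — which is granted since $T$ is assumed measurable and $U$ continuous) and the applicability of Lemma~\ref{lm:potential_U_key} pointwise for $\mu$-a.e.\ $x$ (immediate, as the lemma holds for all $x,\bar x \in \R^d$). One might also want to note that the bound is vacuous unless the relevant integrals are finite, but since $U$ is bounded below (being $\lambda^*$-convex in a suitable metric) the integrals are well-defined in $(-\infty,+\infty]$ and the inequality holds in that extended sense, which suffices for the application to $\KL$. The main conceptual content has already been extracted in Lemma~\ref{lm:potential_U_key}; this proposition is the routine integration step.
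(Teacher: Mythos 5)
Your argument is correct and is essentially identical to the paper's proof: both evaluate Lemma~\ref{lm:potential_U_key} at $\bar{x}=T(x)$, recognize $(y_m,x_{-m})=\hat{T}^m(x)$, and integrate against $\mu$, invoking the pushforward change-of-variables to identify each term. The extra remarks you add about measurability and the values in $(-\infty,+\infty]$ are sensible but not needed beyond what the paper tacitly assumes.
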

\begin{proof}
We evaluate~\eqref{eq:potential_U_key} when $\bar{x} = T(x)$. Note in this case that $(y_m,x_{-m}) = \hat{T}^m(x)$. Therefore, when integrating with respect to $\mu$, we have
\[
\frac{1}{M} \sum_{m=1}^M\int_{\R^d}U\left(\hat{T}^m(x) \right) \mu(\d x) \leq \left( 1 - \frac{1}{\kappa^* M} \right)\U(\mu) + \frac{1}{\kappa^* M} \int_{\R^d}U\left(T(x) \right) \mu(\d x). 
\]
This yields the result by definition of the pushforward.
\end{proof}

Proposition \ref{prop:conclusion_estimate_U} relates the potential energies of $((\hat{T}^m)_\# \mu)_{m=1,\dots,M}$ to the one of $T_\# \mu$.
Note that $T$ can be \emph{any} map, it does not have to be e.g.\ the optimal transport map. This flexibility will be important to handle the entropy part. On the other hand $\hat{T}^m_{m}(x)$ is a convex combination of $x_m$ and $T_m(x)$ with weights $(1-\lambda^*,\lambda^*)$ depending on the convexity parameters of $U$.

\subsection{The entropy part}
\label{sec:proof_entropy_part}

We now turn to the entropy part. Note that the computations below do not use the assumption on the potential $U$ as they deal with the entropy $\mathcal{H}$; they only rely on the structure of the transport maps.

If $T$ is a $C^1$-diffeomorphism then for any $\mu$ with $\mathcal{H}(\mu) < + \infty$ we have 
\begin{equation}
\label{eq:formula_entropy_pf}
\mathcal{H}(T_\# \mu) = \mathcal{H}(\mu) - \int_{\R^d} \log |\det DT(x)| \, \mu(\d x)\,,
\end{equation}
where $DT(x)$ denotes the Jacobian matrix of $T$ and $|\det DT(x)|$ is the absolute value of its determinant.
The above follows from the change of variables formula $(T_\# \mu)(x) = \mu(T^{-1}(x)) / |\det DT(T^{-1}(x))|$. To handle the entropy we will rely on a specific class of transport maps: not optimal transport ones but rather triangular ones.

\begin{definition}
A map $T : \R^d \to \R^d$ is said triangular if, for any $i = 1, \ldots, d$, the $i$-th component of $T(x)$ depends only on $(x_1, \ldots, x_i)$ but not on $(x_{i+1}, \ldots, x_d)$. We say in addition that it is increasing if $x_i \mapsto T_i(x_i,\bar{x}_{-i})$ is increasing for any $\bar{x}\in\R^d$ and any $i=1, \ldots d$.  
\end{definition}

 If $T$ is a triangular map which is also differentiable, then its Jacobian matrix is lower triangular. If in addition it is increasing, then the diagonal coefficients of the Jacobian matrix are non-negative.

There is a canonical triangular increasing transport map between continuous measures: it was introduced independently for statistical testing by Rosenblatt~\cite{rosenblatt1952remarks}, and to prove geometric inequalities by Knothe~\cite{knothe1957contributions}. It is usually called the Knothe-Rosenblatt map: we refer to e.g.~\cite[Section 2.3]{santambrogio2015optimal} for a modern presentation of the following theorem.  

\begin{theorem}
\label{theo:existence_KR}
Let $\mu, \nu \in \sP(\R^d)$ be absolutely continuous with respect to the Lebesgue measure.
Then there exists a triangular and increasing map $\bar{T} : \R^d \to \R^d$ such that $\bar{T}_\# \mu = \nu$. 
\end{theorem}

 In the sequel we will also assume that the maps are $C^1$-diffeomorphisms in order to use~\eqref{eq:formula_entropy_pf}. Though the Knothe-Rosenblatt map given by Theorem~\ref{theo:existence_KR} is not always smooth, we will later reason by approximation if it is not the case: see Proposition~\ref{prop:KR_smooth} and the arguments which follow below.

\begin{lemma}
\label{lm:entropy_triangular_eq}
Let $\mu\in\sP(\R^d)$ and $T : \R^d \to \R^d$ be a triangular $C^1$-diffeomorphism. For $m=1, \ldots, M$ define $\hat{T}^m : \R^d \to \R^d$ by $\hat{T}_{-m}^m(x) = x_{-m}$ and $\hat{T}_{m}^m(x) = T_m(x)$. Then
\begin{equation*}
\frac{1}{M} \sum_{m=1}^M \sH\left( (\hat{T}^m)_\# \mu \right) = \frac{M-1}{M} \mathcal{H}(\mu) + \frac{1}{M} \mathcal{H}(T_\# \mu). 
\end{equation*}
\end{lemma}

\begin{proof}
From~\eqref{eq:formula_entropy_pf} the left hand side reads
\begin{align*}
\frac{1}{M} \sum_{m=1}^M \sH\left( (\hat{T}^m)_\# \mu \right) &= \frac{1}{M}\sum_{m = 1}^M\left( \mathcal{H}(\mu) - \int_{\R^d} \log |\det D\hat{T}^m(x)| \, \mu(\d x)\right)\\
&= \frac{M-1}{M} \mathcal{H}(\mu) + \frac{1}{M}\left(\mathcal{H}(\mu)- \sum_{m = 1}^M\int_{\R^d} \log |\det D\hat{T}^m(x)| \, \mu(\d x)\right).
\end{align*}
Given the definition of $\hat{T}^m$, we have that $\det D\hat{T}^m(x)$ coincides with $\det [ DT(x) ]_{mm}$, with $[DT(x)]_{mm}$ the $d_m \times d_m$ diagonal block of the matrix $DT(x)$. Thus, again with~\eqref{eq:formula_entropy_pf} we have 
\begin{multline}
\label{eq:proof_entropy_full}
\frac{1}{M} \sum_{m=1}^M \sH\left( (\hat{T}^m)_\# \mu \right) = \frac{M-1}{M} \mathcal{H}(\mu) + \frac{1}{M} \mathcal{H}(T_\# \mu) \\
+ \left\{ \frac{1}{M} \int_{\R^d} \left( \log |\det DT(x)| - \sum_{m=1}^M \log | \det [ DT(x) ]_{mm}|  \right) \, \mu(\d x) \right\}.
\end{multline}
At this point we use the triangular structure: for a single $x \in \R^d$, as $D T(x)$ is triangular, we have
\begin{equation*}
|\det DT(x)| =  \prod_{m=1}^M |\det [D T]_{mm}(x)|. 
\end{equation*}
Taking the logarithm and integrating with respect to $x$, we see that the term in brackets in~\eqref{eq:proof_entropy_full} vanishes and we have the result.
\end{proof}

We expect Lemma \ref{lm:entropy_triangular_eq} to extend beyond the case of $C^1$-diffeomorphisms. However, the $C^1$ assumption makes the proof more direct and is useful for Lemma~\ref{lm:convexity_along_KR} below. 

\begin{remark}
\label{rmk:why_not_OT}
Note the algebraic similarity of the result of Lemma~\ref{lm:entropy_triangular_eq} with~\eqref{eq:algebra_separable}: roughly speaking, when we push measures through triangular maps, the entropy behaves like a separable function. Here the triangular structure is important: if we take $T$ an optimal transport map, then Equation~\eqref{eq:proof_entropy_full} is still valid, but in this case $DT$ is a symmetric matrix and we cannot find a sign to the term in brackets. The only a priori information is $DT \leq M \, \mathrm{diag}([DT]_{11}, \ldots, [DT]_{MM})$, see~\cite[Lemma 1]{nesterov2012efficiency}, thus the term in brackets is bounded by $\log (M)$. But we cannot say more.  
\end{remark}

To move forward, and given the shape of the maps $\hat{T}^m$ in Proposition~\ref{prop:conclusion_estimate_U}, we need to evaluate $\mathcal{H}(T_\# \mu)$ when $T$ is a linear combination of the identity and a triangular map. 

\begin{lemma}
\label{lm:convexity_along_KR}
Let $\mu\in\sP(\R^d)$ and $T : \R^d \to \R^d$ be triangular, increasing, and a $C^1$-diffeomorphism. Then the function $t \mapsto \sH \left([(1-t)\Id+ t T]_\# \mu \right)$ is convex for $t \in [0,1]$.
\end{lemma}

 If $T$ were an optimal transport map instead of a triangular increasing map, then this lemma corresponds to the celebrated displacement convexity of the entropy~\cite{mccann1997convexity}, see also \cite[Chapters 16\&17]{villani2009optimal} and \cite[Chapter 7]{santambrogio2015optimal}. The proof of this lemma is actually the same as in optimal transport, as it is enough that $D T$ has non-negative eigenvalues for the result to hold.  

\begin{proof}
Let $\sigma_1(x), \ldots, \sigma_d(x)$ be the eigenvalues of $D T(x)$ for $x \in \R^d$. As $D T(x)$ is a triangular matrix these correspond to diagonal coefficients, that is, $\sigma_i(x) = \partial_{x_i} T_i(x)$, and they are strictly positive since the map $x_i \mapsto T_i(x_i,\bar{x}_{-i})$ is assumed to be increasing for any $\bar{x}$ and $i$. Thus, starting from~\eqref{eq:formula_entropy_pf},
\begin{align*}
\sH \left([(1-t)\Id+ t T]_\# \mu \right) & = \mathcal{H}(\mu) - \int_{\R^d} \log | \det [(1-t)\Id+ tD T(x)] | \, \mu(\d x) \\
& = \mathcal{H}(\mu) - \sum_{i=1}^d \int_{\R^d} \log( (1-t) + t \sigma_i(x)) \, \mu(\d x). 
\end{align*}
(Note that we removed the absolute value thanks to the non-negativity of $\sigma_i(x)$.) As $t \mapsto - \log( (1-t) + t \sigma_i(x))$ is convex, we see that the function $t \mapsto \sH \left([(1-t)\Id+ t T]_\# \mu \right)$ is also convex. 
\end{proof}

 We now are in position to conclude our study of the entropy part. 

\begin{prop}
\label{prop:conclusion_estimate_H}
Let $\mu\in\sP(\R^d)$, $t\in[0,1]$ and $T : \R^d \to \R^d$ be triangular, increasing and a $C^1$-diffeomorphism. For $m=1, \ldots, M$ define $\hat{T}^m : \R^d \to \R^d$ by $\hat{T}^m_{-m}(x) = x_{-m}$ while $\hat{T}^m_{m}(x)= (1-t) x_m + t T_m(x)$. Then
\begin{equation*}
\frac{1}{M} \sum_{m=1}^M \sH \left(  (\hat{T}^m)_\# \mu \right) \leq \left( 1 - \frac{t}{M} \right)\mathcal{H}(\mu) + \frac{t}{ M} \mathcal{H}(T_\# \mu).
\end{equation*}
\end{prop}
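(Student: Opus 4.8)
The plan is to combine the two previous entropy lemmas with an interpolation/convexity argument. The key observation is that the map $\hat{T}^m$ defined here by $\hat{T}^m_m(x) = (1-t)x_m + tT_m(x)$ is exactly the $m$-th ``one-coordinate'' truncation of the global map $T^{(t)} := (1-t)\Id + tT$. Indeed, since $T$ is triangular and increasing, so is $T^{(t)}$ (a convex combination of $\Id$, which is triangular increasing, with a triangular increasing map is again triangular increasing), and $T^{(t)}$ is still a $C^1$-diffeomorphism for $t\in[0,1]$ because its Jacobian $(1-t)\Id + tDT(x)$ has strictly positive eigenvalues $(1-t)+t\sigma_i(x)$. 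Hence Lemma~\ref{lm:entropy_triangular_eq} applies with $T$ replaced by $T^{(t)}$, and noting that the truncation of $T^{(t)}$ to the $m$-th block is precisely the $\hat{T}^m$ of the statement, it gives
\begin{equation*}
\frac{1}{M}\sum_{m=1}^M \sH\left((\hat{T}^m)_\#\mu\right) = \frac{M-1}{M}\sH(\mu) + \frac{1}{M}\sH\left(T^{(t)}_\#\mu\right).
\end{equation*}

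Next I would bound $\sH(T^{(t)}_\#\mu) = \sH([(1-t)\Id + tT]_\#\mu)$ using Lemma~\ref{lm:convexity_along_KR}: the function $s\mapsto \sH([(1-s)\Id+sT]_\#\mu)$ is convex on $[0,1]$, so evaluating the convex combination $t = (1-t)\cdot 0 + t\cdot 1$ yields
\begin{equation*}
\sH\left(T^{(t)}_\#\mu\right) \leq (1-t)\,\sH(\Id_\#\mu) + t\,\sH(T_\#\mu) = (1-t)\,\sH(\mu) + t\,\sH(T_\#\mu).
\end{equation*}
Substituting this into the previous identity gives
\begin{equation*}
\frac{1}{M}\sum_{m=1}^M \sH\left((\hat{T}^m)_\#\mu\right) \leq \frac{M-1}{M}\sH(\mu) + \frac{1-t}{M}\sH(\mu) + \frac{t}{M}\sH(T_\#\mu) = \left(1-\frac{t}{M}\right)\sH(\mu) + \frac{t}{M}\sH(T_\#\mu),
\end{equation*}
which is exactly the claim. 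I would also remark that $\sH(\mu) < +\infty$ may be assumed without loss of generality (otherwise $\KL(\mu|\pi)=+\infty$ and there is nothing to prove in the end), so that all quantities appearing are well-defined.

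The only genuinely delicate point is the smoothness hypothesis: Lemmas~\ref{lm:entropy_triangular_eq} and~\ref{lm:convexity_along_KR} are stated for $C^1$-diffeomorphisms, and one must make sure $T^{(t)}$ qualifies whenever $T$ does — this is the verification that $(1-t)\Id + tT$ is a $C^1$-diffeomorphism, which follows from the positivity of the diagonal Jacobian entries of the triangular increasing map $T$ as recalled above (its inverse is then also $C^1$ by the inverse function theorem, applied blockwise along the triangular structure). I do not expect a substantive obstacle here since the hard analytic work (the two lemmas) is already done; the proposition is essentially an assembly step, with the interpolation parameter $t$ playing the role that $\lambda^*$ will play when this is combined with Proposition~\ref{prop:conclusion_estimate_U} in the final proof of Theorem~\ref{theo:functional_main}.
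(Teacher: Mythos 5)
Your proof is correct and follows exactly the paper's argument: apply Lemma~\ref{lm:entropy_triangular_eq} to $T^{(t)}=(1-t)\Id+tT$ (whose one-coordinate truncations are precisely the $\hat{T}^m$), then use the convexity from Lemma~\ref{lm:convexity_along_KR}. Your extra verification that $T^{(t)}$ remains a triangular, increasing $C^1$-diffeomorphism is a helpful detail that the paper leaves implicit.
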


\begin{proof}
We use first Lemma~\ref{lm:entropy_triangular_eq} and then the convexity given by Lemma~\ref{lm:convexity_along_KR}:
\begin{align*}
\frac{1}{M} \sum_{m=1}^M \sH \left(  (\hat{T}^m)_\# \mu \right) & =   \frac{M-1}{M} \mathcal{H}(\mu) + \frac{1}{M} \sH \left([(1-t)\Id+ t T]_\# \mu \right) \\
& \leq  \frac{M-1}{M} \mathcal{H}(\mu) + \frac{1}{M} \biggl( (1-t) \mathcal{H}(\mu) + t \mathcal{H}(T_\# \mu) \biggr)\\
& = \left( 1 - \frac{t}{M} \right)\mathcal{H}(\mu) + \frac{t}{ M} \mathcal{H}(T_\# \mu). \qedhere
\end{align*}
\end{proof}

Compared to Proposition~\ref{prop:conclusion_estimate_U}, the result for $\sH$ holds for a specific class of maps (the triangular and increasing ones), but then $T_m(x)$ is any convex combination of $x_m$ and $\bar{T}_m(x)$. 

\subsection{Conclusion of the proof of the main result}

\begin{proof}[Proof of Theorem~\ref{theo:functional_main} if the Knothe-Rosenblatt map is a diffeomorphism]
We first prove the result if the Knothe-Rosenblatt map $\bar{T}$ between $\mu$ and $\pi$, given by Theorem~\ref{theo:existence_KR}, is a $C^1$-diffeomorphsim. In this case for $m=1, \ldots, M$ define $\hat{T}^m : \R^d \to \R^d$ by $\hat{T}^m_{-m}(x) = x_{-m}$ and $\hat{T}^m_{m}(x) = (1-\lambda^*) x_m + \lambda^* \bar{T}_m(x)$. With Proposition~\ref{prop:conclusion_estimate_U}, as $\bar{T}_\# \mu = \pi$ we find 
\begin{equation*}
\frac{1}{M} \sum_{m=1}^M \U \left(  (\hat{T}^m)_\# \mu \right) \leq \left( 1 - \frac{1}{\kappa^* M} \right)\U(\mu) + \frac{1}{\kappa^* M} \U(\pi).
\end{equation*}
On the other hand, for the entropy part we apply Proposition~\ref{prop:conclusion_estimate_H} with $t = \lambda^* = 1/\kappa^*$ to obtain
\begin{equation*}
\frac{1}{M} \sum_{m=1}^M \sH \left(  (\hat{T}^m)_\# \mu \right) \leq \left( 1 - \frac{1}{\kappa^* M} \right)\mathcal{H}(\mu) + \frac{1}{\kappa^* M} \mathcal{H}(\pi).
\end{equation*}
Summing these two inequalities yields~\eqref{eq:to_prove}, hence the conclusion.
\end{proof}

To move to the general case, we will need the following result on assumptions guaranteeing the regularity of the Knothe-Rosenblatt map.  

\begin{prop}
\label{prop:KR_smooth}
Assume $\mu,\nu$ are supported on a hypercube $Q_R=[-R,R]^d$, with $0<R<\infty$, and their density is of class $C^1$ on $Q_R$, and uniformly strictly positive. 
Then the Knothe-Rosenblatt map $\bar{T}$ from Theorem~\ref{theo:existence_KR} is a $C^1$-diffeomorphism. 
\end{prop}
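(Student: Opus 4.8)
The plan is to combine the explicit recursive construction of the Knothe--Rosenblatt map with the implicit function theorem. Recall that $\bar{T}$ is built one coordinate at a time: write $\mu_{1:k}$ for the density of $(x_1,\dots,x_k)$ under $\mu$, and
\[
F_k(x_1,\dots,x_k) = \frac{1}{\mu_{1:k-1}(x_1,\dots,x_{k-1})}\int_{-R}^{x_k} \mu_{1:k}(x_1,\dots,x_{k-1},t)\,\d t
\]
for the conditional cumulative distribution function of $x_k$ given $(x_1,\dots,x_{k-1})$ (with the analogous quantities $\nu_{1:k}$ and $G_k$ for $\nu$, and $\mu_{1:0}=\nu_{1:0}\equiv 1$). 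The map $\bar T$ is characterized by $G_k(\bar T_1(x_1),\dots,\bar T_k(x_1,\dots,x_k)) = F_k(x_1,\dots,x_k)$ for $k=1,\dots,d$, equivalently by $H(x,\bar T(x))=0$, where $H_k(x,y) := F_k(x_{1:k}) - G_k(y_{1:k})$. From the standard construction of $\bar T$, each partial map $y_k\mapsto G_k(y_{1:k-1},y_k)$ is a continuous increasing bijection of $[-R,R]$ onto $[0,1]$, so $\bar T$ is a bijection of $Q_R$ onto itself, triangular and increasing along the diagonal.

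The first step is to establish the regularity and nondegeneracy of these building blocks. I would show that for every $k$ the marginal density $\mu_{1:k}$, obtained by integrating $\mu$ over $(x_{k+1},\dots,x_d)\in[-R,R]^{d-k}$, is of class $C^1$ on $[-R,R]^k$ and satisfies $0<c\le\mu_{1:k}\le C<\infty$ for constants depending only on $R,d$ and the bounds on $\mu$. The $C^1$ claim follows by differentiation under the integral sign, legitimate because $\nabla\mu$ is continuous, hence bounded, on the compact cube $Q_R$; the two-sided bounds follow from the uniform positivity hypothesis and the continuity (hence boundedness) of $\mu$ on $Q_R$. Consequently $F_k$ is $C^1$ on $[-R,R]^k$ (it is an integral, in the last variable, of a ratio of $C^1$ functions with nonvanishing denominator), and
\[
\partial_{x_k} F_k(x_{1:k}) = \frac{\mu_{1:k}(x_{1:k})}{\mu_{1:k-1}(x_{1:k-1})}
\]
is precisely the conditional density of $x_k$, which is bounded above and below by positive constants uniformly in $x$. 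The same holds for $\nu_{1:k}$ and $G_k$.

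The second step is to run the implicit function theorem on $H(x,y)=0$. The map $H:Q_R\times Q_R\to\R^d$ is $C^1$ by the previous step; to work on an open set one may first extend the densities to $C^1$ functions on a neighbourhood of $Q_R$. Its partial Jacobian $\partial_y H$ is lower triangular, since $\partial_{y_j}H_k = -\partial_{y_j}G_k(y_{1:k})$ vanishes for $j>k$, with diagonal entries $-\partial_{y_k}G_k(y_{1:k})$ equal to minus the conditional density of $y_k$, hence bounded away from $0$; so $\partial_y H$ is invertible with uniformly bounded inverse, and the implicit function theorem gives that $x\mapsto\bar T(x)$ is $C^1$ with bounded derivative. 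Symmetrically, $\partial_x H$ is lower triangular with diagonal entries $\partial_{x_k}F_k(x_{1:k})$ equal to the conditional density of $x_k$, hence bounded away from $0$; since $\bar T$ is a bijection of $Q_R$, the equivalence $H(x,y)=0\iff x=\bar T^{-1}(y)$ lets us apply the implicit function theorem again to conclude that $\bar T^{-1}$ is $C^1$. Hence $\bar T$ is a $C^1$-diffeomorphism. (Equivalently, once $\bar T\in C^1$ one notes that $D\bar T$ is lower triangular with diagonal $\partial_{x_k}\bar T_k = \partial_{x_k}F_k\big/\bigl(\partial_{y_k}G_k\circ\bar T_{1:k}\bigr)$, bounded above and below by positive constants, so $\det D\bar T$ never vanishes and the inverse function theorem applies.)

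The main obstacle I anticipate is the joint $C^1$ regularity of $F_k$ and $G_k$ in all $k$ variables at once — in particular justifying differentiation under the integral sign uniformly up to the boundary of the cube — together with the related boundary issue in applying the implicit function theorem on the closed cube $Q_R$ (handled by extending the densities, or via one-sided derivatives). These points are routine but require a little care. Once the triangular system $H=0$ is in place, the symmetry between the roles of $x$ and $y$ makes the smoothness of both $\bar T$ and $\bar T^{-1}$ fall out of the same implicit-function-theorem computation, with the uniform two-sided bounds on the conditional densities ensuring nondegeneracy throughout.
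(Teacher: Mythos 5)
Your proof is correct and follows essentially the same approach as the paper's: you establish that the conditional CDFs $F_k, G_k$ are $C^1$ with conditional densities bounded above and below, and then deduce smoothness of $\bar T$ and its inverse from the defining triangular system via the implicit/inverse function theorem. The paper's proof is much shorter only because it outsources exactly this step to \cite[Remark 2.19]{santambrogio2015optimal}; you have written out what that remark delivers.
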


\begin{proof}
Indeed with these assumptions for any $i$ the conditional densities $\mu(x_{i+1}, \ldots, x_d | x_{1}, \ldots x_{i} )$ and $\nu(x_{i+1}, \ldots, x_d | x_{1}, \ldots x_{i} )$ have a density which is strictly positive and of class $C^1$ on the hypercube. The smoothness of $\bar{T}$ follows by~\cite[Remark 2.19]{santambrogio2015optimal}.
\end{proof}

Given Proposition~\ref{prop:KR_smooth}, we can proceed by approximation, that is, we approximate $\mu$ and $\pi$ by $C^1$ measures supported on $Q_R$ and then exploit the regularity of the associated Knothe-Rosenblatt map.

\begin{proof}[Proof of Theorem~\ref{theo:functional_main} in the general case]

Without loss of generality, fix $\mu$ with $\KL(\mu| \pi) < + \infty$. 
It is easy to check that, if we approximate $U$ by a sequence $(U^p)_{p=1,2,\dots}$ obtained by convolution with smooth and compactly supported kernel, then $U^p$ satisfies Assumption~\ref{asmp:convex_smooth_block} with the same $\kappa^*$ for any $p$. Moreover, it can be done in such a way that $U^p$ is of class $C^1$ and converges uniformly on compact sets to $U$ as $p\to\infty$. 
Denote by $\mu^R$, $\pi^R$ and $\pi^{R,p}$  the restriction of $\mu$, $\pi$ and $\exp(-U^p)$ on $Q_R$, normalized to be probability distributions; and by $U^{R}$ and $U^{R,p}$ the negative log densities of $\pi^R$ and $\pi^{R,p}$ on $Q_R$.

Similarly, for each $R>0$, let $(\mu^{R,p})_{p=1,2,\dots}$ be a sequence of probability distributions, whose densities are of class $C^1$ and strictly positive, and which converges weakly to $\mu^R$ as $p \to + \infty$ and such that $\limsup_p \mathcal{H}(\mu^{R,p}) \leq \mathcal{H}(\mu^R)$. This can be obtained by convolving $\mu^R$ with Gaussian with vanishing variance and then normalizing: as the variance of the Gaussian vanishes, the convolution asymptotically does not change the entropy~\cite[Lemma 3]{rioul2010information}, and neither does normalization as the mass outside of $Q_R$ converges to $0$.

By Proposition~\ref{prop:KR_smooth}, the Knothe-Rosenblatt map between $\mu^{R,p}$ and $\pi^{R,p}$ is a $C^1$-diffeomorphism. Thus, we can apply Theorem~\ref{theo:functional_main} to obtain
\begin{equation}\label{eq:before_limit}
\frac{1}{M} \sum_{m=1}^M \KL\left( \left. \mu^{R,p}_{-m} \right| \pi^{R,p}_{-m}\right) \leq \left( 1 - \frac{1}{\kappa^* M} \right) \KL\left(\left. \mu^{R,p} \right| \pi^{R,p}\right). 
\end{equation}
We then pass to the limit $p  \to + \infty$ followed by $R \to + \infty$ in \eqref{eq:before_limit}. 
For the right hand side, by the uniform convergence of the $U^{R,p}$ to $U^{R}$ and the weak convergence of $\mu^{R,p}$ to $\mu^{R}$ on $Q_R$, and on the other hand the estimate $\limsup_p \mathcal{H}(\mu^{R,p}) \leq \mathcal{H}(\mu^R)$, we have 
 \begin{align*}
\limsup_{p \to + \infty} \KL\left(\left. \mu^{R,p} \right| \pi^{R,p}\right) & = \limsup_{p \to + \infty} \left( \int_{Q_R} U^{R,p} \, \d \mu^{R,p} + \mathcal{H}(\mu^{R,p})  \right) \\
& \leq  \int_{Q_R} U^{R} \, \d \mu^{R} + \mathcal{H}(\mu^R)  = 
\KL\left(\left. \mu^R \right| \pi^R\right)\,. 
\end{align*}
By the monotone convergence theorem $\KL(\mu^R | \pi^R) \to \KL(\mu|\pi)$ as $R\to\infty$, so that the above implies
\begin{equation}
\label{eq:aux_approx}
\limsup_{R \to + \infty} \limsup_{p \to + \infty} \KL(\mu^{R,p}| \pi^{R,p}) \leq \KL(\mu|\pi).
\end{equation}
For the left hand side of \eqref{eq:before_limit}, from the weak convergence of $\mu^{R,p}$ and $\pi^{R,p}$ to respectively $\mu$ and $\pi$, we know that for each $m$ the marginals $\mu^{R,p}_{-m}$ and $\pi^{R,p}_{-m}$ converge weakly to respectively $\mu_{-m}$ and $\pi_{-m}$ as $p \to + \infty$ followed by $R \to + \infty$. As $\KL$ is jointly lower semi-continuous (see e.g.~\cite[Lemma 9.4.3]{AGS}), we deduce
\begin{equation}\label{eq:liminf_aux}
\KL(\mu_{-m}|\pi_{-m}) \leq \liminf_{R \to + \infty} \liminf_{p \to + \infty} \; \KL\left( \left. \mu^{R,p}_{-m} \right| \pi^{R,p}_{-m}\right). 
\end{equation}
Using~\eqref{eq:aux_approx} and~\eqref{eq:liminf_aux} to pass to the limit in~\eqref{eq:before_limit}, we obtain \eqref{eq:functional_main} as desired.
\end{proof}

\section{Rate of convergence in the non-strongly convex case}
\label{sec:non_strongly_convex}

Theorem~\ref{theo:contraction_KL_main} degenerates and becomes uninformative when $\lambda^* = 0$. 
This corresponds to the case where the potential is assumed to be smooth and convex, but not strongly convex. 
In this case, similarly to the case of optimization, we show that the convergence of $\mu^{(n)}$ to $\pi$ in $\KL$ degrades from exponential to being $\sO(1/n)$.
As above, we denote $\mu^{(n+1)} = \mu^{(n)} P^\mathrm{GS}$ for $n\geq 0$.

\begin{theorem}
\label{theo:non_strongly_convex}
Let $\pi$ satisfy Assumption~\ref{asmp:convex_smooth_block} but with $\lambda^* = 0$, and let $x^*$ be a mode of $\pi$. For $\mu^{(0)} \in \sP(\R^d)$, define $R$ by
\begin{equation*}
R^2 = \max \left( \KL(\mu^{(0)}|\pi) , \; 4 \, \sup_{n \geq 0} \,  \int_{\R^d} \| x -x^* \|^2_L \,  \mu^{(n)}(\d x)\right)\,.
\end{equation*}
Then for any $n \geq 0$ it holds that
\begin{equation*}
\KL(\mu^{(n)} | \pi) \leq \frac{2M R^2}{n+2M}\,.
\end{equation*}
\end{theorem}

\begin{proof}
We do a one-step bound on the decay of $\KL$. We recall the decomposition $\KL(\cdot | \pi) = \U + \sH$ of Section~\ref{sec:proof_main} and we first fix $\mu$ with $\KL(\mu|\pi) < +  \infty$. 

We first consider the potential part. Let $x, \bar{x}\in\R^d$, $t \in [0,1]$ and $y = (1-t) x + t \bar{x}$. Then \eqref{eq:aux_potential_without_cancellations} and the convexity of $U$ yield
\begin{equation}\label{eq:before_integration}
\begin{aligned}
\frac{1}{M} \sum_{m=1}^M U(y_m,x_{-m}) &\leq \frac{M-1}{M} U(x) + \frac{1}{M} U(y) + \frac{\| x - y \|_L^2}{2M}\\
& \leq  \left( 1 - \frac{t}{M} \right) U(x) + \frac{t}{M} U(\bar{x}) + \frac{t^2}{2M} \| x - \bar{x} \|_L^2.   
\end{aligned}
\end{equation}
Suppose $T : \R^d \to \R^d$ measurable such that $T_\# \mu = \pi$, and define for $m=1, \ldots, M$ the maps $\hat{T}^m : \R^d \to \R^d$ by $\hat{T}^m_{-m}(x) = x_{-m}$ and $\hat{T}^m_{m}(x) = (1-t) x_m + t T_m(x)$.
Then, if we take $\bar{x} = T(x)$ and integrate \eqref{eq:before_integration} with respect to $\mu(\d x)$, using the fact that $\hat{T}^m(x)=(y_m,x_{-m})$, we have  
\begin{equation*}
\frac{1}{M} \sum_{m=1}^M \U \left(  (\hat{T}^m)_\# \mu \right) \leq  \left( 1 - \frac{t}{M} \right) \U(\mu) + \frac{t}{M} \U(\pi) + \frac{t^2}{2M} \int_{\R^d} \| T(x) - x \|_L^2 \, \mu(\d x).
\end{equation*}
If $T$ is the Knothe-Rosenblatt map between $\mu$ and $\pi$ and it is a $C^1$-diffeomorphism, we can apply Proposition~\ref{prop:conclusion_estimate_H} and, summing with the above, obtain for any $t \in [0,1]$, 
\begin{equation}
\label{eq:aux_proof_non_convex}
\frac{1}{M} \sum_{m=1}^M \KL \left( \left.  (\hat{T}^m)_\# \mu \right| \pi \right) \leq  \left( 1 - \frac{t}{M} \right) \KL(\mu|\pi) + \frac{t^2}{2M} \int_{\R^d} \| T(x) - x \|_L^2 \, \mu(\d x).
\end{equation}
We now take $\mu = \mu^{(n)}$. The left hand side is connected to the entropy of $\mu^{(n)} P^\mathrm{GS} = \mu^{(n+1)}$ through Lemma~\ref{lm:variational_characterization_Pm} as expanded in the proof of Theorem~\ref{theo:functional_main}. 
For the right hand side we use $\|T(x) - x \|_L^2 \leq 2\| T(x) - x^* \|_L^2 + 2\| x -x^* \|_L^2$, and that $T_\# \mu^{(n)} = \pi$. By definition $2 \int \| x -x^* \|_L^2 \,  \mu^{(n)}(\d x) \leq R^2/2$ while passing to the limit $n \to + \infty$ we also have $2 \int \| x -x^* \|_L^2 \,  \pi(\d x) \leq R^2/2$. Thus~\eqref{eq:aux_proof_non_convex} yields
\begin{align*}
\KL \left( \left. \mu^{(n+1)} \right| \pi \right) & \leq \frac{1}{M} \sum_{m=1}^M \KL\left( \left. \mu^{(n)}_{-m} \right| \pi_{-m} \right) \\
& \leq  \frac{1}{M} \sum_{m=1}^M \KL \left( \left.  (\hat{T}^m)_\# \mu^{(n)} \right| \pi \right) \leq \left( 1 - \frac{t}{M} \right) \KL(\mu^{(n)}|\pi) + \frac{t^2 R^2}{2M}.
\end{align*}
We choose $t = \KL(\mu^{(n)}|\pi) / R^2$ which belongs to $[0,1]$ as $\KL(\mu^{(n)}|\pi) \leq \KL(\mu^{(0)} | \pi) \leq R^2$. Thus we find 
\begin{equation*}
\KL \left( \left. \mu^{(n+1)} \right| \pi \right) \leq  \KL(\mu^{(n)}|\pi) \left( 1 - \frac{\KL(\mu^{(n)}|\pi)}{2MR^2}  \right).   
\end{equation*}
Note that we need the Knothe-Rosenblatt map between $\mu^{(n)}$ and $\pi$ to be a $C^1$-diffeomorphism to use Proposition~\ref{prop:conclusion_estimate_H}, if not we resort to an approximation argument as in the proof of Theorem~\ref{theo:functional_main}.

This inequality easily implies the rate announced in the theorem. In particular, $u_n = \KL(\mu^{(n)}|\pi)$ is a non-increasing sequence which satisfies $u_{n+1} \leq u_n (1 -u_n/(2MR^2))$. This implies
\begin{equation*}
\frac{1}{u_{n+1}} - \frac{1}{u_n} = \frac{u_n - u_{n+1}}{u_{n+1} u_n} \geq \frac{1}{2MR^2} \frac{u_n}{u_{n+1}} \geq \frac{1}{2MR^2}.
\end{equation*}
Summing from over $n$ yields $1/u_{n} - 1/u_0 \geq n/(2MR^2)$ and we conclude from $u_0 \leq R^2$.
\end{proof}

The bound in Theorem \ref{theo:non_strongly_convex} resembles the ones available for coordinate descent in Euclidean spaces, see e.g.~\cite[Eq.\ (2.14)]{nesterov2012efficiency}. In the latter case, however, the term $R^2$ can be bounded with the diameter of the sub-level sets of the target function. By contrast in our case, if $\pi$ is only log-concave but not strongly log-concave, a bound on $\KL(\mu|\pi)$ does not control the second moment of $\mu$ so that it is unclear if $\KL(\mu^{(0)}|\pi) < + \infty$ implies $R < + \infty$. However if we make the stronger assumption of a warm start (see Corollary~\ref{crl:GS_mix_time} and the discussion following it for more comments), we can guarantee that $R < + \infty$ as follows.

\begin{corollary}
\label{crl:non_strongly}
Let $\pi$ satisfy Assumption~\ref{asmp:convex_smooth_block} but with $\lambda^* = 0$, and $x^*$ be a mode of $\pi$. Assume that $\mu^{(0)}\in\sP(\R^d)$ satisfies $\mu^{(0)}(A) \leq C \pi(A)$ for some $C<\infty$ and any $A\in\mathcal{B}(\R^d)$. Then for any $n \geq 0$, 
\begin{equation*}
\KL(\mu^{(n)} | \pi) \leq \frac{2M}{n+2M} \, \max \left( \log(C), \, 4 C \int_{\R^d} \| x - x^* \|_L^2 \, \pi(\d x)  \right).
\end{equation*}
\end{corollary}

\begin{proof}
Let $f^{(n)}$ be the density of $\mu^{(n)}$ with respect to $\pi$. By assumption $f^{(0)} \leq C$, and as $f^{(n+1)} = P^\mathrm{GS} f^{(n)}$, we see easily that $f^{(n)} \leq C$ for any $n$. Thus for any $n$ 
\begin{equation*}
\int_{\R^d} \| x - x^* \|^2_L \,  \mu^{(n)}(\d x) \leq C \int_{\R^d} \| x - x^* \|^2_L \,  \pi(\d x),
\end{equation*}
while on the other hand $\KL(\mu^{(0)} | \pi) \leq \log(C)$. The result follows from Theorem~\ref{theo:non_strongly_convex}.
\end{proof}

Thus in the limit of large warm start constant $C$, we can guarantee $\KL(\mu^{(n)}|\pi) \leq \epsilon$ by taking $n= \mathcal{O}(MC/\epsilon)$, where $\mathcal{O}$ hides a multiplicative constant depending only on the second moment of $\pi$ in the metric $\| \cdot \|_L$. We do not claim that the linear dependence with respect to $C$ is tight, and we leave the development of better bounds on $R$ to future work.

\section{Metropolis-within-Gibbs and general coordinate-wise kernels}
\label{sec:MwG}

In many contexts, it is common to replace the exact full conditional updates in Algorithm \ref{alg:rs-gs} with more general conditional updates. This leads to general ``coordinate-wise'' Markov transition kernels of the form
\begin{align}\label{eq:mwg_def}
P^\mathrm{MwG}&=\frac{1}{M}\sum_{m=1}^M \bar{P}_m        
&
\bar{P}_m(x,A) = \int_{\sX_m} \1_A(y_m,x_{-m}) \bar{P}_m^{x_{-m}}(x_m,\d y_m)
\end{align}
where $\bar{P}_m^{x_{-m}}$ is an arbitrary $\pi(\cdot|x_{-m})$-invariant Markov kernel on $\sX_m \times \mathcal{B}(\sX_m)$. Algorithmically, a sample $Y\sim P^\mathrm{MwG}(x,\cdot)$ is obtained as:
\begin{enumerate}
\item pick a coordinate $m\in\{1,\dots,M\}$ at random,
\item sample $Y_m\sim \bar{P}_m^{x_{-m}}(x_m,\cdot)$ and set $Y_{-m}=x_{-m}$.
\end{enumerate}
A common choice for $\bar{P}_m^{x_{-m}}$ is given by Metropolis-Hastings (MH) kernels: they combine a user-defined proposal kernel, $Q_m^{x_{-m}}(x_m,\cdot) \in \sP(\sX_{m})$ with an accept/reject step to have $\pi(\cdot|x_{-m})$ as invariant distribution. Specifically they are defined as
\begin{equation}\label{eq:operator_MH}
\begin{aligned}
\bar{P}_m^{x_{-m}}(x_m, A) 
=
\int_A\alpha_m^{x_{-m}}(x_m, y_m)&Q_m^{x_{-m}}(x_m, \d  y_m)\\
&+\delta_{x_m}(A)\int_{\sX_m} \left[1-\alpha_m^{x_{-m}}(x_m, y_m)\right]Q_m^{x_{-m}}(x_m, \d  y_m),
\end{aligned}
\end{equation}
for every $x\in\R^d$ and $A \in \mathcal{B}(\sX_m)$, with
\begin{equation*}
\alpha^{x_{-m}}_m(x_m, y_m) 
=
 \min \left\{ 1, 
\frac{ 
 \pi(\d y_m | x_{-m}) Q_m^{x_{-m}}(y_m, \d x_m)
 }{
 \pi(\d x_m | x_{-m}) Q_m^{x_{-m}}(x_m, \d y_m)
 }
 \right\}\,.
\end{equation*}
The resulting algorithms are usually called Metropolis-within-Gibbs (MwG) schemes, which is why we use the MwG acronym, even if in Theorem~\ref{thm:MwG} below we do not require $\bar{P}_m^{x_{-m}}$ to be of MH type.
Note that $P^\mathrm{MwG}$ coincides with $P^\mathrm{GS}$ if $\bar{P}_m^{x_{-m}}(x_m,\cdot)=\pi(\cdot|x_{-m})$.
MwG schemes are often used when they are more convenient to implement or computationally faster relative to the exact Gibbs updates discussed in Section \ref{sec:implementation}.

We have the following result for $P^\mathrm{MwG}$.
The proof of part (b) relies on 
a minor variation of \cite[Corollary 11]{qin2023spectral}, whose direct proof we report for completeness.
\begin{theorem}\label{thm:MwG}
Let $\pi$ satisfy Assumption~\ref{asmp:convex_smooth_block}.
\begin{enumerate}
\item[(a)] Assume $\bar{P}_m^{x_{-m}}$ are $\pi(\cdot|x_{-m})$-invariant kernels such that: for some $\beta > 0$, there holds
\begin{equation}\label{eq:KL_contr_cond_updates}
\KL(\nu \bar{P}_m^{x_{-m}}| \pi(\cdot|x_{-m})) \leq (1-\beta) \KL(\nu | \pi(\cdot|x_{-m}))
\end{equation}
for all $m = 1,\dots,M$, $x_{-m}\in \sX_{-m}$, and $\nu\in\mathcal{P}(\sX_m)$. Then for any $\mu\in\sP(\R^d)$
\begin{equation*}
\KL(\mu P^\mathrm{MwG} | \pi) \leq \left( 1 - \frac{\beta}{\kappa^* M} \right) \KL(\mu| \pi).
\end{equation*}
\item[(b)] Assume $\bar{P}_m^{x_{-m}}$ are $\pi(\cdot|x_{-m})$-reversible kernels such that: for some $\beta > 0$, there holds $\gap(\bar{P}_m^{x_{-m}})\geq \beta$ for all $m=1,\dots,M$ and $x_{-m} \in \sX_{-m}$. Then
$$
\gap(P^\mathrm{MwG})\geq \frac{\beta}{\kappa^* M}\,.
$$
\end{enumerate}
\end{theorem}
\begin{proof}
Consider first part (a), and let $\mu\in\mathcal{P}(\R^d)$.
By the convexity of KL and the chain rule in \eqref{eq:KL_chain_rule}, combined with the fact that $(\mu \bar{P}_m)_{-m}=\mu_{-m}$ (since $\bar{P}_m$ moves only the $m$-th coordinate), we have
\begin{align*}
\KL(\mu| \pi)-&\KL(\mu P^\mathrm{MwG}| \pi) 
\geq\frac{1}{M}\sum_{m=1}^M\KL(\mu| \pi)-\KL(\mu \bar{P}_m| \pi) 
\\
&
=\frac{1}{M}\sum_{m=1}^M
\E_{\mu_{-m}} \biggl[\KL(\mu(\cdot|X_{-m}) | \pi(\cdot|X_{-m}))
-
\KL(\mu(\cdot|X_{-m})\bar{P}_m^{X_{-m}}| \pi(\cdot|X_{-m}))\biggr]\,.
\end{align*}
By \eqref{eq:KL_contr_cond_updates} applied to $\nu=\mu(\cdot|x_{-m})$, and using then the formulation~\eqref{eq:functional_main_alternative} of our main result, 
\begin{equation*}
\KL(\mu| \pi)-\KL(\mu P^\mathrm{MwG}| \pi) \geq \frac{\beta}{M}\sum_{m=1}^M \E_{\mu_{-m}}[\KL(\mu(\cdot|X_{-m}) | \pi(\cdot|X_{-m}))] \geq   \frac{\beta}{\kappa^* M}\KL(\mu| \pi) \,,
\end{equation*}
which is our conclusion.

We now turn to part (b). 
The reversibility of $\bar{P}_m^{x_{-m}}$ implies the one of $P^\mathrm{MwG}$, so that we can apply the spectral gap definition in \eqref{eq:gap}. 
For $f \in L^2(\pi)$, by definition we have
\begin{align}
\langle f, (\Id-P^\mathrm{MwG})f \rangle_{\pi} 
&=
\frac{1}{M}\sum_{m = 1}^M
\langle f, (\Id-\bar{P}_m)f \rangle_{\pi} 
\nonumber\\&=
\frac{1}{M}\sum_{m = 1}^M\int_{\sX_{-m}} \langle f(\cdot, x_{-m}), (\Id-\bar{P}^{x_{-m}}_m)f(\cdot, x_{-m}) \rangle_{\pi(\cdot |  x_{-m})}\pi_{-m}(\d x_{-m})\,.\label{equality_proof_gap1}
\end{align}
By definition, $\gap(\bar{P}_m^{x_{-m}})\geq \beta$ implies 
\begin{align*}
\langle f(\cdot, x_{-m}), (\Id-\bar{P}^{x_{-m}}_m)f(\cdot, x_{-m}) \rangle_{\pi(\cdot| x_{-m})}\geq \beta \mathrm{Var}_{\pi(\cdot |  x_{-m})}(f(\cdot, x_{-m}))    
\end{align*}
where $\mathrm{Var}_{\pi(\cdot |  x_{-m})}(f(\cdot, x_{-m}))$ denotes $\mathrm{Var}\left(f(X) |  X_{-m} = x_{-m} \right)$ under $X\sim \pi$. 
Injecting the inequality above in \eqref{equality_proof_gap1} and using the expression~\eqref{eq:expression_Dir_PGS} of the Dirichlet form for $P^\mathrm{GS}$,
\begin{align*}
\langle f, (\Id-P^\mathrm{MwG})f \rangle_{\pi} 
&\geq 
\frac{\beta}{M} \sum_{m=1}^M  \int_{\sX_{-m}} \mathrm{Var}_{\pi(\cdot |  x_{-m})}(f(\cdot, x_{-m})) \pi_{-m}(\d x_{-m})
=
\beta \langle f, (\Id-P^\mathrm{GS})f \rangle_{\pi}\,,
\end{align*}
which implies $\gap(P^\mathrm{MwG})\geq \beta \gap(P^\mathrm{GS})$.
The result then follows by Corollary \ref{crl:variance_inequality}.
\end{proof}
Theorem \ref{thm:MwG} implies that, if the exact conditional updates in GS are replaced with Markov updates $\bar{P}_m^{x_{-m}}$ that contract at rate $(1-\beta)$ in $\KL$, or whose spectral gap is lower bounded by $\beta$, the total number of iterations required to converge is increased by at most a multiplicative factor $1/\beta$ when going from GS to MwG. 
Importantly, the dimensionality $d_m$ of $\pi(\cdot|x_{-m})$ is independent of $d$ (e.g.\ one can even take $d_m=1$), and thus it is reasonable to expect that in many contexts that $\beta$ does not depend directly on $d$.

We now give two examples of MH kernels to which Theorem \ref{thm:MwG} can be applied.
For the entropy contraction example (Corollary \ref{cor:IMH}), we rely on the following lemma, which provides a general (though very strong) sufficient condition for \eqref{eq:KL_contr_cond_updates} based on a simple minorization argument.
\begin{lemma}\label{lemma:minorizing}
Assume $
\bar{P}_m^{x_{-m}}(x_m, A) \geq \beta \pi(A | x_{-m})
$
for some $\beta > 0$  and any $x_m\in\sX_m$ and $A \in\mathcal{B}( \sX_m)$. Then \eqref{eq:KL_contr_cond_updates} holds with the same $\beta$.
\end{lemma}
\begin{proof}
We can rewrite $\bar{P}_m^{x_{-m}}(x_m,A)=\beta \pi(A| x_{-m})+(1-\beta) \hat{P}_m^{x_{-m}}(x_m,A)$ where $\hat{P}_m^{x_{-m}}$ is a $\pi(\cdot| x_{-m})$-invariant Markov kernel.
Then, by convexity of the relative entropy and $\pi(\cdot|x_{-m})$-invariance of $\hat{P}_m^{x_{-m}}$ (see also \cite[Lemma 9.4.5]{AGS}), for any $\nu\in\sP(\sX_m)$ we obtain
\begin{align*}
\qquad\quad
\KL(\nu \bar{P}_m^{x_{-m}}| \pi(\cdot|x_{-m})) &\leq 
(1-\beta)\KL(\nu \hat{P}_m^{x_{-m}}| \pi(\cdot|x_{-m}))\\
&=
(1-\beta)\KL(\nu \hat{P}_m^{x_{-m}}| \pi(\cdot|x_{-m}) \hat{P}_m^{x_{-m}})\\
&\leq 
(1-\beta)\KL(\nu | \pi(\cdot|x_{-m})).
\qquad\quad\qedhere
\end{align*}
\end{proof}

\begin{corollary}[MwG with independent MH updates]\label{cor:IMH}
Let $\pi$ satisfy Assumption \ref{asmp:convex_smooth_block} with $U_m(x)=0$ for $m=1,\dots,M$. Then, taking $\bar{P}_m^{x_{-m}}$ as in \eqref{eq:operator_MH} with $Q_m^{x_{-m}}(x_m,\cdot)= N(x^*_m(x_{-m}), L_m^{-1}\Id_{d_m})$, where $x^*_m(x_{-m})$ denotes the unique mode of $\pi(\cdot| x_{-m})$, we have for any $\mu \in \sP(\R^d)$
\[
\KL(\mu P^\mathrm{MwG}| \pi) \leq \left(1-\frac{1}{(\kappa^*)^{1+d_\text{max}/2}M}\right) \KL(\mu | \pi)\,,
\]
where $d_\text{max}=\max_{m=1,\dots,M}d_m$.
\end{corollary}
\begin{proof}
Since $U_m=0$ for $m\geq 1$, we have $U=U_0$ and Assumption \ref{asmp:convex_smooth_block} implies that $\pi(\cdot|x_{-m})$ is strongly log-concave and smooth with condition number upper bounded by $\kappa^*$.
Then it is well-known \citep[Equation (12)]{D19} that
$\pi(A | x_{-m})\leq (\kappa^*)^{d_m/2}Q_m^{x_{-m}}(x_m,A)\leq (\kappa^*)^{d_m/2}\bar{P}_m^{x_{-m}}(x_m,A)$ for all $x\in\R^d$, $A\in\mathcal{B}( \sX_m)$. 
The result now follows by 
part (a) of Theorem \ref{thm:MwG}, combined with Lemma \ref{lemma:minorizing}, taking $\beta=(\kappa^{*})^{-d_\text{max}/2}$.
\end{proof}
While being convenient to prove \eqref{eq:KL_contr_cond_updates}, implementing the proposal distribution $Q_m^{x_{-m}}(x_m,\cdot)= N(x^*_m(x_{-m}), L_m^{-1}\Id_{d_m})$ in Corollary \ref{cor:IMH} requires the user to compute $x^*_m(x_{-m})$ at each iteration, which makes it more cumbersome to implement, and it also results in a bound that depends exponentially on $d_\text{max}$. 
A much more common and practical choice is to use a RWM update for $P_m^{x_{-m}}$, see e.g.\ \cite[Section 4.2]{ascolani2024scalability} and \cite[Section 5.1]{qin2023spectral}.
In this case \eqref{eq:KL_contr_cond_updates} does not hold for any $\beta>0$, but explicit bounds on $\gap(\bar{P}_m^{x_{-m}})$ are available from the literature \citep{AL22}. Thus, part (b) of Theorem \ref{thm:MwG} can be directly applied.

\begin{corollary}[MwG with RWM updates]\label{thm:gap_MwG_RWM}
Let $\pi$ satisfy Assumption \ref{asmp:convex_smooth_block} with $U_m(x)=0$ for $m=1,\dots,M$.
Also, for all $m=1,\dots,M$ and $x\in\R^d$, let $\bar{P}_m^{x_{-m}}$ be as in \eqref{eq:operator_MH}
with proposal $Q_m(x_m,\cdot)=N(x_m,(L_md_m)^{-1}\Id_{d_m})$. Then
\begin{equation*}
\gap(P^\mathrm{MwG})\geq \displaystyle{\frac{C }{(\kappa^*)^2 M d_\text{max}}}\,,   
\end{equation*}
where $C>0$ is a universal constant and $d_\text{max}=\max_{m=1,\dots,M}d_m$. 
\end{corollary}
\begin{proof}
As in the proof of Corollary~\ref{cor:IMH}, $\pi(\cdot|x_{-m})$ is strongly log-concave and smooth with condition number $\kappa^*$.
Thus, by \cite[Corollary 35]{AL22} we have $\gap(\bar{P}_m^{x_{-m}})\geq C (d_m\kappa^*)^{-1}$, with $C>0$ being a universal constant (independent of $\pi$, $d_m$ and $\kappa^*$) explicitly defined therein.
The result then follows from part (b) of Theorem \ref{thm:MwG}, taking $\beta=C (\kappa^* d_\text{max})^{-1}$.
\end{proof}
In the special case of $d_m=1$ for all $m=1,\dots,M$, the bound of Corollary~\ref{thm:gap_MwG_RWM} becomes
\begin{equation*}
\gap(P^\mathrm{MwG})\geq \frac{C }{(\kappa^*)^2 d}\geq \frac{C }{\kappa^2 d}\,.   
\end{equation*}
To the best of our knowledge, this is the first explicit bound on the spectral gap of MwG schemes in general log-concave settings (see e.g.\ \cite{ascolani2024scalability} and \cite{qin2023spectral} and references therein for available results on MwG).
By arguments similar to the ones of Section \ref{subsec:tightness}, it is easy to see that the linear dependence with respect to $d$ is tight. 
On the contrary, the dependence on $\kappa$ is sub-optimal and the follow-up work \cite{secchi2025spectral} has shown that the lower bound $C/(\kappa^2 d)$ can be improved to $C/(\kappa d)$, albeit using a different choice of step-size in the RWM proposal $Q_m^{x_{-m}}$.

\section{Hit-and-Run and multivariate extensions}
\label{sec:extension_HR}

In this section we extend our main result to other sampling algorithms based on conditional updates. We focus on the Hit-and-Run sampler (H\&R), where a direction to update from is chosen at random, and we can handle the multidimensional version of it. The proof requires to generalize our main result to a blocked version of GS.

\subsection{\texorpdfstring{$\ell$}{l}-dimensional Hit-and-Run}
\label{subsec:HR_ell}

Let $P^{\mathrm{HR},\ell}$ be the Markov kernel which, at each iteration, randomly picks an $\ell$-dimensional affine subspace passing through the current point and samples from $\pi$ restricted to such space. 
This is the $\ell$-dimensional version of the H\&R scheme, which includes standard H\&R (i.e.\ Algorithm~\ref{alg:h_and_run}) as the special case with $\ell=1$. 
More formally, let $V_\ell(\R^d)$ be the Stiefel manifold, that is, the set of orthonormal $\ell$-frames: 
\begin{align}\label{eq:ell_orth_basis}
V_\ell(\R^d)=
\{
\textbf{v}=(v_1, \dots, v_\ell)\in (\mathbb{S}^{d-1})^\ell
\,:\,
v_i^\top v_j=0\hbox{ for }i\neq j
\}
\subseteq (\R^d)^\ell\,.    
\end{align}
On $V_\ell(\R^d)$ let $\sigma^{(\ell)}$ be the uniform measure, that is, the Haar measure with respect to rotations, normalized to be a probability measure~\cite{Chikuse1990}. 
Then, given $x\in\R^d$, a sample $Y\sim P^{\mathrm{HR},\ell}(x,\cdot)$ is obtained as:
\begin{enumerate}
    \item sample an orthonormal $\ell$-frame $(v_1,\dots,v_\ell)\sim \sigma^{(\ell)}$,
    \item sample $(s_1,\dots,s_\ell)\in \R^\ell$ with density proportional to $\pi(x+\sum_{i=1}^\ell s_iv_i)$ and set $Y=x+\sum_{i=1}^\ell s_iv_i$.
\end{enumerate}
If $\ell = 1$ then $V_1(\R^d)$ is the unit sphere $\mathbb{S}^{d-1} = \{ z \in \R^d \ : \ \| z \| = 1 \}$ and $\sigma^{(1)}$ is the uniform measure on it, so that $P^{\mathrm{HR},1}=P^\mathrm{HR}$ introduced in Algorithm~\ref{alg:h_and_run}. 

\begin{theorem}\label{theo:HR_ell}
Let $\pi $ satisfy Assumption~\ref{asmp:convex_smooth} and $\ell \in \{ 1, \ldots, d \}$. Then for any $\mu \in \sP(\R^d)$,
\begin{align}\label{eq:HR_KL_contr}
\KL(\mu P^{\mathrm{HR},\ell} | \pi) & \leq \left( 1 - \frac{\ell}{\kappa d} \right) \KL(\mu|\pi).
\end{align}
\end{theorem}

Contrary to our main result, Theorem \ref{theo:HR_ell} uses Assumption~\ref{asmp:convex_smooth}. This is reasonable: unlike Assumption~\ref{asmp:convex_smooth_block}, Assumption \ref{asmp:convex_smooth} is invariant under rotation, as is the kernel $P^\mathrm{HR,\ell}$. 
Also, note how the entropy contraction coefficient in \eqref{eq:HR_KL_contr} improves linearly with $\ell$. 
This is consistent with the intuition that, as $\ell$ increases, the associated Markov chain mixes faster as a larger part of $X^{(n)}$ is updated at each iteration. 
Nevertheless, sampling from $P^{\mathrm{HR},\ell}$ requires to sample from an $\ell$-dimensional distribution, which becomes more computationally expensive as $\ell$ increases, and implementing the extreme case $\ell = d$ is equivalent to being able to sample from $\pi$.

While the case $\ell = 1$ would follow directly from Theorem~\ref{theo:contraction_KL_main} after writing the H\&R kernel as a mixture of GS kernels, the case $\ell > 1$ requires the analysis of a multi-block version of GS which can be interesting in itself and which is found in Theorem~\ref{theo:GS_ell} below. The proof of Theorem~\ref{theo:HR_ell} can then be found in Section~\ref{sec:proof_HR_ell}.

As they can be of independent interest, we also state explicitly the inequality involving marginal entropies (in the flavor of Theorem~\ref{theo:functional_main}) and the corollary about the spectral gap. We do so for the kernel $P^{\mathrm{HR},d-\ell}$ as the formula are easier to parse.  
Below, if $\mathbf{v} \in V_\ell(\R^d)$, we denote by $p_\mathbf{v} : \R^d \to \R^d$ the orthogonal projection onto $\mathrm{span}(v_1, \ldots, v_\ell)$.

\begin{corollary}
\label{crl:inequality_HR_ell}
Let $\pi$ satisfy Assumption~\ref{asmp:convex_smooth} and $\ell \in \{ 1, \ldots, d \}$. Then,
\begin{enumerate}
\item[(a)] For any $\mu \in \sP(\R^d)$ we have
\begin{align*}
\int_{V_\ell(\R^d)}
\KL((p_{\mathbf{v}})_\# \mu | (p_{\mathbf{v}})_\# \pi) \,
\sigma^{(\ell)}(\d \mathbf{v})& \leq \left( 1 - \frac{d - \ell}{\kappa d} \right) \KL(\mu|\pi)\,.
\end{align*}
\item[(b)] Given $X\sim\pi$, for any $f \in L^2(\pi)$ we have
\begin{align*}
\int_{V_\ell(\R^d)}
\E\left(\mathrm{Var}(f(X) | p_{\mathbf{v}}(X))\right)\,
 \sigma^{(\ell)}(\d \mathbf{v})
& \geq \frac{d - \ell}{\kappa d} \mathrm{Var}(f(X))\,.
\end{align*}
\end{enumerate}    
\end{corollary}

\subsection{\texorpdfstring{$\ell$}{l}-dimensional random-scan Gibbs Sampler}

In order to prove the results of Section~\ref{subsec:HR_ell} we need to analyze a multi-block version of GS. Then Theorem~\ref{theo:HR_ell} will follow by writing $P^{\mathrm{HR},\ell}$ as a mixture of multi-block GS kernels. 

Given $\ell\in\{1,\dots,M\}$, consider a kernel $P^{\mathrm{GS},\ell}$ on $\R^d=\R^{d_1}\times\dots\times\R^{d_M}$ that, at each iteration, updates $\ell$ out of $M$ randomly chosen coordinates from their conditional distribution given the other $M-\ell$ coordinates.
For $S\subseteq \{1,\dots,M\}$, we denote $x_S=(x_m)_{m\in S}$ and $x_{-S}=(x_m)_{m\notin S}$, while $\pi(\cdot|x_{-S})$ denotes the conditional distribution of $X_S$ given $X_{-S}=x_{-S}$ under $X\sim \pi$.
With these notations, given $x\in\R^d$, a sample $Y\sim P^{\mathrm{GS},\ell}(x,\cdot)$ is obtained as:
\begin{enumerate}
\item pick $\ell$ out of $M$ coordinates $S\subseteq \{1,\dots,M\}$ uniformly at random without replacement,
\item sample $Y_S\sim \pi(\cdot|x_{-S})$ and set $Y_{-S}=x_{-S}$.
\end{enumerate}
If $\ell = 1$ then $P^{\mathrm{GS},\ell}=P^\mathrm{GS}$. As for the $\ell$-dimensional H\&R, the chain is expected to mix faster as $\ell$ increases, but also to be harder to implement. 

\begin{theorem}\label{theo:GS_ell}
Let $\pi$ satisfy Assumption~\ref{asmp:convex_smooth} and $\ell \in \{ 1, \ldots, M \}$. Then for any $\mu \in \sP(\R^d)$,
\begin{align*}
\KL(\mu P^{\mathrm{GS},\ell} | \pi) & \leq \left( 1 - \frac{\ell}{\kappa M} \right) \KL(\mu|\pi).
\end{align*}
\end{theorem}

This theorem is not a consequence of Theorem~\ref{theo:contraction_KL_main} , but its proof strategy is analogous. When $\ell = 1$ so that $P^{\mathrm{GS},\ell}=P^\mathrm{GS}$, Theorem \ref{theo:GS_ell} is less sharp than Theorem~\ref{theo:contraction_KL_main} since it involves the condition number $\kappa$, which is larger than $\kappa^*$. A refined analysis may replace $\kappa$ by a new condition number $\kappa(\ell) \in [\kappa^*,\kappa]$ depending on $\ell$, but we stick to $\kappa$ for simplicity.

The rest of this subsection is dedicated to the proof of Theorem~\ref{theo:GS_ell}, which will follow by adapting the steps of the proof of Theorem~\ref{theo:functional_main} to this new setting. Indeed we actually prove the analogue of Theorem~\ref{theo:functional_main}, that is,
\begin{equation}
\label{eq:to_prove_GS_ell}
\frac{1}{{M \choose \ell}} \sum_{|S|=\ell} \KL(\mu_{-S} | \pi_{-S}) \leq \left( 1 - \frac{\ell}{\kappa M} \right) \KL(\mu | \pi), 
\end{equation}
where the sum runs over all subsets of $\{ 1, \ldots, M \}$ of size $\ell$.
From this inequality the proof of Theorem~\ref{theo:GS_ell} follows in the same way that Theorem~\ref{theo:contraction_KL_main} follows from Theorem~\ref{theo:functional_main}. 

In order to prove~\eqref{eq:to_prove_GS_ell}, we need to extend Propositions \ref{prop:conclusion_estimate_U} and \ref{prop:conclusion_estimate_H} to the case of updating $\ell$ coordinates at a time. We will use this elementary identity from combinatorics: for any $m\in\{1,\dots,M\}$,
\begin{equation}
\label{eq:combinatorics}
\frac{1}{{M \choose \ell}} \sum_{|S|=\ell}  \1_{m \in S} = \frac{\ell}{M}\,.
\end{equation}
Said with words, the average number of time that an index $m$ belongs to a random subset of size $\ell$ is $\ell/M$. 

For the potential part, Proposition \ref{prop:conclusion_estimate_U} extends as follows. 

\begin{prop}
\label{prop:estimate_U_ell}
Under Assumption~\ref{asmp:convex_smooth}, let $\mu\in\sP(\R^d)$ and $T : \R^d \to \R^d$ be a measurable map. Define $t^* = 1/\kappa$ and for $S\subseteq \{1,\dots,M\}$ with $|S|=\ell$, define $\hat{T}^S : \R^d \to \R^d$ by $\hat{T}^S_{-S}(x) = x_{-S}$ while $\hat{T}^S_{S}(x)= (1-t^*) x_S + t^* T_S(x)$. Then 
\begin{equation*}
\frac{1}{{M \choose \ell}} \sum_{|S|=\ell} \U \left(  (\hat{T}^S)_\# \mu \right) \leq \left( 1 - \frac{\ell}{\kappa M} \right)\U(\mu) + \frac{\ell}{\kappa M} \U(T_\# \mu).
\end{equation*}
\end{prop}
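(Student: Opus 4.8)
The plan is to mimic the proof of Proposition~\ref{prop:conclusion_estimate_U}, replacing the role of single coordinates by blocks of size $\ell$, and replacing the convex-combination weight $\lambda^*=1/\kappa^*$ by $t^*=\ell/M$ together with the convexity constant governing Assumption~\ref{asmp:convex_smooth}. The key point is that under Assumption~\ref{asmp:convex_smooth} the potential $U$ is $L$-smooth and $\lambda$-convex, so after the usual rescaling it is $1$-smooth and $(1/\kappa)$-convex; this is exactly the structure that made Lemma~\ref{lm:potential_U_key} work with $M$ blocks, so I expect the analogue with $\binom{M}{\ell}$ blocks of size $\ell$ to go through with the same algebra. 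Concretely, I would first establish the block analogue of the pointwise inequality~\eqref{eq:potential_U_key}: for $x,\bar x\in\R^d$ and $y=(1-t^*)x+t^*\bar x$,
\begin{equation*}
\frac{1}{\binom{M}{\ell}} \sum_{|S|=\ell} U(y_S, x_{-S}) \leq \left( 1 - \frac{\ell}{\kappa M} \right) U(x) + \frac{\ell}{\kappa M} U(\bar{x}).
\end{equation*}

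To prove this, I would run the same three-step argument as in Lemma~\ref{lm:potential_U_key}, but using the combinatorial identity~\eqref{eq:combinatorics} in place of the trivial $\frac{1}{M}\sum_m \1_{m\in S}$-type averaging. First, $L$-smoothness of $U$ gives $U(y_S,x_{-S}) \le U(x) + \nabla_S U(x)^\top(y_S-x_S) + \frac{L}{2}\|y_S-x_S\|^2$. Averaging over all $S$ of size $\ell$ and invoking~\eqref{eq:combinatorics} twice (once for the linear term, once for the quadratic term, both of which are separable over coordinates of $S$) turns the right-hand side into $\frac{M-\ell}{M}U(x) + \frac{\ell}{M}\big(U(x) + \nabla U(x)^\top(y-x) + \frac{L}{2}\|y-x\|^2\big)$. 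Then convexity of $U$ bounds $U(x)+\nabla U(x)^\top(y-x)$ by $U(y)$, leaving an extra $\frac{L}{2}\|y-x\|^2$ term; this is the analogue of~\eqref{eq:aux_potential_without_cancellations}. Finally, $\lambda$-convexity of $U$ applied to $y=(1-t^*)x+t^*\bar x$ gives $U(y)\le (1-t^*)U(x)+t^*U(\bar x) - \frac{\lambda t^*(1-t^*)}{2}\|x-\bar x\|^2$, and since $\|y-x\|^2 = (t^*)^2\|x-\bar x\|^2$, the two distance terms combine into $\big(\frac{L (t^*)^2}{2} - \frac{\lambda t^*(1-t^*)}{2}\big)\|x-\bar x\|^2$ times $\frac{\ell}{M}$. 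Choosing (or rather, being forced by the statement to check) that this coefficient is non-positive requires $t^* L \le \lambda(1-t^*)$; however the stated bound uses the weight $t^*=\ell/M$ only in the convex combination defining $\hat T^S$, while the contraction factor is $\ell/(\kappa M)$, so I would actually re-examine whether the intended decomposition is the same as in Lemma~\ref{lm:potential_U_key} with the roles of the two constants kept separate — i.e. the convex-combination parameter in $\hat T^S_S(x)=(1-t^*)x_S+t^*T_S(x)$ should really be $1/\kappa$ rather than $t^*$, and the factor $t^*$ appears only through the fraction $\ell/M$ of updated blocks. (I note the displayed statement writes $(1-t)x_S+t^*T_S(x)$, which looks like a typo for $(1-t^*)x_S + t^*T_S(x)$ or for a separate parameter; I would clarify this with the cleanest choice, presumably mirroring Lemma~\ref{lm:potential_U_key} exactly with $\lambda^*$ replaced by $1/\kappa$ and $M$ replaced by the block count, so that the distance terms cancel.) Once the pointwise inequality holds with the correct weights, I would integrate against $\mu(\d x)$ with $\bar x = T(x)$, note that $(y_S,x_{-S}) = \hat T^S(x)$, and read off the claim from the definition of the pushforward, exactly as in the proof of Proposition~\ref{prop:conclusion_estimate_U}.

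The main obstacle is bookkeeping: making sure the combinatorial averaging via~\eqref{eq:combinatorics} correctly produces the factors $\frac{M-\ell}{M}$ and $\frac{\ell}{M}$ on the quadratic and linear terms (these terms are sums over the $\ell$ coordinates in $S$, so one needs $\sum_{|S|=\ell}\sum_{m\in S}(\cdots_m) = \binom{M-1}{\ell-1}\sum_m(\cdots_m)$, equivalently the $\ell/M$ identity), and then verifying that the residual distance terms cancel with the chosen convex-combination weight. None of this is deep, but getting the constant in the convex combination right so that the cancellation is exact (rather than merely producing a non-positive leftover) is the one place where care is needed; I expect it to come out identical to Lemma~\ref{lm:potential_U_key} after the substitution $\lambda^*\mapsto 1/\kappa$, $M\mapsto$ (number of blocks), with the $\ell/M$ simply counting the fraction of coordinates touched per step.
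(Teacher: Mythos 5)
Your proof is correct and takes essentially the same route as the paper: block-wise $L$-smoothness, combinatorial averaging via the identity $\binom{M}{\ell}^{-1}\sum_{|S|=\ell}\1_{m\in S}=\ell/M$ applied to both the linear and quadratic terms, convexity to absorb the affine part into $U(y)$, and finally $\lambda$-convexity along the segment $y=(1-t)x+t\bar x$ so that the residual quadratic terms cancel, followed by integration with $\bar x = T(x)$. You are also right to flag the typo: the convex-combination weight in $\hat T^S_S$ must be $1/\kappa$ (not $t^*=\ell/M$), exactly as in Lemma~\ref{lm:potential_U_key} with $\lambda^*$ replaced by $\lambda/L=1/\kappa$, and the factor $\ell/M$ enters only through the fraction of updated blocks; this is confirmed by the fact that Proposition~\ref{prop:estimate_H_ell} must be invoked with the same $t$ and yields $1-t\ell/M$, which matches $1-\ell/(\kappa M)$ only for $t=1/\kappa$.
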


\begin{proof}
We start by working on the potential $U$. Indeed thanks to~\eqref{eq:combinatorics}, 
\begin{equation*}
\frac{1}{{M \choose \ell}} \sum_{|S|=\ell} \nabla_{x_S} U(x)^\top (y_S - x_S)=\frac{\ell}{M} \nabla U(x)^\top (y-x), \qquad 
\frac{1}{{M \choose \ell}} \sum_{|S|=\ell} \| y_S - x_S \|^2 = \frac{\ell}{M} \| y - x \|^2.   
\end{equation*}
Thus, for any $y\in\R^d$, by $L$-smoothness, the above equality and convexity we have
\begin{align*}
\frac{1}{{M \choose \ell}} \sum_{|S|=\ell}  U(y_S, x_{-S})
& \leq 
\frac{1}{{M \choose \ell}} \sum_{|S|=\ell}
\left( U(x) + \nabla_{x_S} U(x)^\top (y_S - x_S) + \frac{L}{2} \|y_S - x_S\|^2 \right) \\
& = \frac{M-\ell}{M} U(x) + \frac{\ell}{M} \left( U(x) + \nabla U(x)^\top (y-x) + \frac{L}{2} \| y-x \|^2 \right) \\
& \leq \frac{M-\ell}{M} U(x) + \frac{\ell}{M} \left( U(y) + \frac{L - \lambda}{2} \| y-x \|^2 \right). 
\end{align*}
Then we set $y = (1-t^*) x + t^* \bar{x}$ with $t^* = 1/\kappa$ for some points $x, \bar{x}$. Noting that $U$ is $\kappa^{-1}$-convex with respect to the norm $L^{1/2}\| y-x \|$, we obtain, similarly to the computations in Lemma~\ref{lm:potential_U_key}, 
\begin{equation*}
\frac{1}{{M \choose \ell}} \sum_{|S|=\ell}  U(y_S, x_{-S}) \leq \left( 1 - \frac{\ell}{M\kappa} \right) U(x) + \frac{\ell}{M \kappa} U(\bar{x}).
\end{equation*}
Eventually we evaluate this equation for $\bar{x} = T(x)$, so that $(y_S, x_{-S}) = T(x)$, and integrate with respect to $\mu$ to get the result. 
\end{proof}

For the entropy part, Proposition \ref{prop:conclusion_estimate_H} extends as follows.
\begin{prop}
\label{prop:estimate_H_ell}
Let $\mu\in\sP(\R^d)$ and $T : \R^d \to \R^d$ be triangular, increasing and a $C^1$-diffeomorphism, and fix $t \in [0,1]$. For $S\subseteq \{1,\dots,M\}$ with $|S|=\ell$, define $\hat{T}^S : \R^d \to \R^d$ by $\hat{T}^S_{-S}(x) = x_{-S}$ while $\hat{T}^S_{S}(x)= (1-t) x_S + t T_S(x)$. Then 
\begin{equation*}
\frac{1}{{M \choose \ell}} \sum_{|S|=\ell} \sH \left(  (\hat{T}^S)_\# \mu \right) 
\leq 
\left( 1 - \frac{t \ell}{M} \right)\mathcal{H}(\mu) + \frac{t\ell}{ M} \mathcal{H}(T_\# \mu).
\end{equation*}
\end{prop}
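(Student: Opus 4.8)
The plan is to mirror the proof of Proposition~\ref{prop:conclusion_estimate_H}, replacing the plain average over $m$ by the average over size-$\ell$ subsets $S$ and using the combinatorial identity~\eqref{eq:combinatorics} in place of the trivial averaging. First I would establish the $\ell$-block analogue of Lemma~\ref{lm:entropy_triangular_eq}: for a triangular $C^1$-diffeomorphism $R:\R^d\to\R^d$, writing $\check{R}^S$ for the map with $\check{R}^S_{-S}(x)=x_{-S}$ and $\check{R}^S_S(x)=R_S(x)$, one has
\begin{equation*}
\frac{1}{{M \choose \ell}} \sum_{|S|=\ell} \sH\left( (\check{R}^S)_\# \mu \right) = \left( 1 - \frac{\ell}{M} \right) \mathcal{H}(\mu) + \frac{\ell}{M}\, \mathcal{H}(R_\# \mu).
\end{equation*}
Indeed, by~\eqref{eq:formula_entropy_pf} and the fact that $\check{R}^S$ fixes the $x_{-S}$ coordinates, its Jacobian is block lower triangular with an identity block on $-S$, so $\det D\check{R}^S(x) = \det [DR(x)]_{SS}$, the principal submatrix of $DR(x)$ indexed by the blocks in $S$. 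Since $R$ is triangular, $DR(x)$ is lower triangular, hence so is $[DR(x)]_{SS}$, which gives $\det [DR(x)]_{SS} = \prod_{m\in S} \det [DR(x)]_{mm}$. Summing $\log|\det D\check{R}^S(x)|$ over $S$, applying~\eqref{eq:combinatorics} in the form $\frac{1}{{M \choose \ell}}\sum_{|S|=\ell}\sum_{m\in S} g(m) = \frac{\ell}{M}\sum_{m=1}^M g(m)$, and re-collapsing $\sum_{m=1}^M \log|\det [DR(x)]_{mm}| = \log|\det DR(x)|$ by triangularity, the displayed identity follows after a last use of~\eqref{eq:formula_entropy_pf}.

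Then I would apply this identity with $R := (1-t)\Id + tT$, which (exactly as in the proof of Proposition~\ref{prop:conclusion_estimate_H}) is again triangular and a $C^1$-diffeomorphism, its Jacobian $(1-t)\Id + tDT(x)$ being lower triangular with strictly positive diagonal, and which satisfies $R_S(x) = (1-t)x_S + tT_S(x) = \hat{T}^S_S(x)$. This yields
\begin{equation*}
\frac{1}{{M \choose \ell}} \sum_{|S|=\ell} \sH\left( (\hat{T}^S)_\# \mu \right) = \left( 1 - \frac{\ell}{M} \right) \mathcal{H}(\mu) + \frac{\ell}{M}\, \sH\left( [(1-t)\Id + tT]_\# \mu \right).
\end{equation*}
Finally, Lemma~\ref{lm:convexity_along_KR} gives that $s\mapsto \sH([(1-s)\Id + sT]_\#\mu)$ is convex on $[0,1]$, so $\sH([(1-t)\Id + tT]_\#\mu) \leq (1-t)\mathcal{H}(\mu) + t\,\mathcal{H}(T_\#\mu)$; substituting and rearranging produces the claimed bound with coefficients $1 - t\ell/M$ and $t\ell/M$.

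The only point requiring a little care, just as in the single-block case, is the regularity of the intermediate map $(1-t)\Id + tT$: since $T$ is a triangular increasing $C^1$-diffeomorphism and convex combination with the identity preserves triangularity, coordinatewise monotonicity, and the $C^1$ structure, this map is again a $C^1$-diffeomorphism and~\eqref{eq:formula_entropy_pf} applies throughout. No genuinely new idea beyond Proposition~\ref{prop:conclusion_estimate_H} is needed here; the combinatorial identity~\eqref{eq:combinatorics} is exactly what replaces the plain average over coordinates, and the triangular structure is again what makes the block-submatrix determinants factorize.
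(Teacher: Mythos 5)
Your proof is correct and follows essentially the same route as the paper. You derive the identity $\frac{1}{{M \choose \ell}} \sum_{|S|=\ell} \log |\det D \hat{T}^S(x)| = \frac{\ell}{M} \log |\det D \bar{T}(x)|$ (with $\bar{T}=(1-t)\Id + tT$) via triangularity and the combinatorial identity~\eqref{eq:combinatorics}, integrate using~\eqref{eq:formula_entropy_pf}, and finish by the convexity of $t\mapsto\sH([(1-t)\Id+tT]_\#\mu)$ from Lemma~\ref{lm:convexity_along_KR}, which is exactly what the paper does; the only difference is stylistic, in that you phrase the first step as a standalone $\ell$-block analogue of Lemma~\ref{lm:entropy_triangular_eq} before specializing to $\bar{T}$.
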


\begin{proof}
Let $\bar{T} = (1-t) \Id + t T$. For each $x \in \R^d$, arguing as in the proof of Lemma~\ref{lm:entropy_triangular_eq} and with the help of~\eqref{eq:combinatorics} we can see that
\begin{equation*}
\frac{1}{{M \choose \ell}} \sum_{|S|=\ell} \log |\det D \hat{T}^S(x)| =  \frac{\ell}{M} \log |\det D \bar{T}(x)|.
\end{equation*}
Then we integrate with respect to $\mu$ and use~\eqref{eq:formula_entropy_pf} to get, after algebraic manipulations,
\begin{equation*}
\frac{1}{{M \choose \ell}} \sum_{|S|=\ell} \sH \left(  (\hat{T}^S)_\# \mu \right)  = \frac{M-\ell}{M} \mathcal{H}(\mu) + \frac{\ell}{M} \mathcal{H}(\bar{T}_\# \mu).
\end{equation*}
Given $\bar{T} = (1-t) \Id + t T$, we use Lemma~\ref{lm:convexity_along_KR} and this yields the conclusion.
\end{proof}

\begin{proof}[Proof of Theorem \ref{theo:GS_ell}]
If the Knothe-Rosenblatt map between $\mu$ and $\pi$ is a $C^1$ diffeomorphism, then \eqref{eq:to_prove_GS_ell} follows from Propositions \ref{prop:estimate_U_ell} and \ref{prop:estimate_H_ell} using the same arguments as in the proof of Theorem~\ref{theo:functional_main}. In the general case, \eqref{eq:to_prove_GS_ell} can be proven by approximation with the help of Proposition~\ref{prop:KR_smooth}. 
Then~\eqref{eq:to_prove_GS_ell} implies the statement in Theorem \ref{theo:GS_ell} thanks to the variational characterization of the entropy, in the same way that Theorem~\ref{theo:functional_main} implies Theorem~\ref{theo:contraction_KL_main}. 
\end{proof}

\subsection{Proof of the results for the \texorpdfstring{$\ell$}{l}-dimensional Hit-and-Run}
\label{sec:proof_HR_ell}

Before starting the proofs themselves, we need additional notations, namely a generalization of the Markov kernels $P_m$, $m=1, \ldots, M$ introduced in Section~\ref{subsec:GS_Pm}. 

If $V$ is a linear subset of $\R^d$, we write $V^\perp$ for the linear subspace orthogonal to it. We write $p_V$, $p_{V^\perp}$ for the projection onto $V$ and $V^\perp$ respectively. Given $\mu \in \sP(\R^d)$, we write $\mu_{V}( \cdot | p_{V^\perp}(x))$ for the conditional distribution of $p_V(X)$ given $p_{V^\perp}(X)=p_{V^\perp}(x)$ under $X\sim\mu$. 
If $\mu$ has a density (still written $\mu$) and $v_1, \ldots, v_\ell$ is an orthonormal frame generating $V$, then it is easy to check that the density of $\mu_V(\cdot | p_{V^\perp}(x))$ in the frame $v_1, \ldots, v_\ell$ is proportional to $\mu(p_{V^\perp}(x)+\sum_{i=1}^\ell s_i v_i)$.

If $V$ is a linear subset of $\R^d$, we write $P_V : \R^d \times \mathcal{B}(\R^d) \to [0,1]$ for the Markov kernel defined as
\begin{equation*}
P_V(x,A) = \int_V \1_{A}(p_{V^\perp}(x) + y) \, \pi_V(\d y | p_{V^\perp}(x) ).
\end{equation*}
The interpretation of the kernel $P_V(x,\cdot)$ is that it keeps the component of $x$ on $V^\perp$ unchanged while the component on $V$ is drawn according to $\pi_V(\cdot | p_{V^\perp}(x) )$. If $V$ is the subspace generated by the points in the $m$-th block, that is, by $(0, \ldots, 0, x_m, 0, \ldots,0)$ with $x_m \in \sX_m$, then $P_V$ coincides with $P_m$ introduced in Section~\ref{subsec:GS_Pm}.
We will only need later the following identity characterizing $P_V$ similar to Lemma~\ref{lm:interpretation_Pm}: if $f \in L^2(\pi)$ then with $X \sim \pi$: 
\begin{equation}
\label{eq:PV_action_f}
P_V f(x) = \E(f(X) | p_{V^\perp}(X) = p_{V^\perp}(x)).    
\end{equation}

In the sequel, to lighten the notations, if $v_1, \ldots, v_\ell$ is a set of vectors, we identify $P_{v_1, \ldots, v_\ell}$ with $P_{\mathrm{span}(v_1, \ldots, v_\ell)}$. In particular if $v$ is a vector, we write $P_v$ as a shortcut for $P_{\R v}$. With these notations, the $\ell$-dimensional H\&R kernel reads
\begin{equation*}
\label{eq:definition_HR_ell}
P^{\mathrm{HR},\ell} = \int_{V_\ell(\R^d)} P_{v_1, \ldots, v_\ell} \,  \sigma^{(\ell)}(\d \mathbf{v})\,,
\end{equation*}
where the above notation indicates a mixture of kernels, i.e.\ $P^{\mathrm{HR},\ell}(x,A) = \int_{V_\ell(\R^d)} P_{v_1, \ldots, v_\ell}(x,A) \,  \sigma^{(\ell)}(\d \mathbf{v})$ for all $x\in\R^d$ and $A\in\mathcal{B}(\R^d)$.
In the case $\ell=1$ we recall that $V_1(\R^d) = \mathbb{S}^{d-1}$ and $\sigma^{(1)}$ is the normalized uniform measure on the unit sphere.

\begin{proof}[Proof of Theorem \ref{theo:HR_ell}]
Theorem \ref{theo:HR_ell} follows from Theorem \ref{theo:GS_ell} after writing $P^{\mathrm{HR},\ell}$ as a mixture of kernels $P^{\mathrm{GS},\ell}$ over randomly rotated coordinate basis $\mathbf{v}\in V_d(\R^d)$, as we now show.

For a basis $\mathbf{v} \in V_d(\R^d)$ of $\R^d$, let us define the Markov kernel $P^{\mathrm{GS},\ell,\mathbf{v}}$ which corresponds to the $\ell$-dimensional GS, but in the basis $\mathbf{v}$. That is, 
\begin{equation*}
P^{\mathrm{GS},\ell, \mathbf{v}} = \frac{1}{{d \choose \ell}} \sum_{|S|=\ell} P_{\mathrm{span}\{ v_i \ : \ i \in S \}}.
\end{equation*}
Then using that if $\mathbf{v} \sim \sigma^{(d)}$, then $\mathbf{w} = \{ v_i \ : \ i \in S \} \in \R^{d \times \ell}$ for $S$ a subset of size $\ell$ follows marginally $\sigma^{(\ell)}$~\cite[Theorem 2.1]{Chikuse1990}, 
\begin{equation*}
\int_{V_d(\R^d)} P^{\mathrm{GS},\ell,\mathbf{v}} \,  \sigma^{(d)}(\d \mathbf{v})
=
\frac{1}{{d \choose \ell}}\sum_{|S| = \ell} 
\int_{V_d(\R^d)} 
P_{\mathrm{span}\{ v_i \ : \ i \in S \}} \,  \sigma^{(d)}(\d \mathbf{v})
= \int_{V_\ell(\R^d)} 
P_{\mathbf{w} } \,  \sigma^{(\ell)}(\d \mathbf{w})
=P^\mathrm{HR,\ell}.
\end{equation*}
In the case $\ell = 1$, it means that the H\&R kernel can be written as a mixture of (classical) GS kernels in rotated basis.
Using this expression and the convexity of $\KL$ we find 
\begin{equation*}
\KL(\mu P^{\mathrm{HR,\ell}} | \pi)
\leq
\int_{V_d(\R^d)}
\KL(\mu P^{\mathrm{GS},\ell,\mathbf{v}}| \pi) \, 
\sigma^{(d)}(\d \mathbf{v})\,.
\end{equation*}
Finally we invoke Theorem \ref{theo:GS_ell} with $d_1 = \ldots =d_M = 1$ so that $d=M$ and use the fact that, for any basis $\mathbf{v}$, the condition number $\kappa$ of $U$ is independent on the choice of the basis, so that, uniformly in $\mathbf{v}$,
\begin{equation*}
\KL(\mu P^{\mathrm{GS},\ell,\mathbf{v}} | \pi)
\leq
\left( 1 - \frac{\ell}{\kappa d} \right) \KL(\mu|\pi)\,.
\end{equation*}
Combining the last two displayed equations we get the desired result.
\end{proof}

\begin{proof}[Proof of Corollary~\ref{crl:inequality_HR_ell}]
If $\mathbf{w} = (w_1, \ldots, w_d)$ is an orthonormal basis of $\R^d$, Equation~\eqref{eq:to_prove_GS_ell} for the $\ell$-dimensional GS with $M=d$ in the basis $\mathbf{w}$ reads
\begin{equation*}
\frac{1}{{d \choose \ell}} \sum_{|S|=\ell} \KL \left( \left. (p_{\mathrm{span}\{ w_i \ : \ i \in S \}^\perp})_\# \mu \right| (p_{\mathrm{span}\{ w_i \ : \ i \in S \}^\perp})_\# \pi \right) \leq \left( 1 - \frac{\ell}{\kappa d} \right) \KL(\mu | \pi)\,. 
\end{equation*}
Then we integrate with respect to $\mathbf{w}$ according to the distribution $\sigma^{(d)}$. Using that $\{ w_i \ : \ i \in S \}$ has distribution $\sigma^{(\ell)}$ for any subset $S$ of size $\ell$~\cite[Theorem 2.1]{Chikuse1990}, 
\begin{align}
\label{eq:aux_proof_crl}
\int_{V_\ell(\R^d)}
\KL((p_{\mathbf{v}^\perp})_\# \mu | (p_{\mathbf{v}^\perp})_\# \pi) \, 
\sigma^{(\ell)}(\d \mathbf{v})& \leq \left( 1 - \frac{\ell}{\kappa d} \right) \KL(\mu|\pi),
\end{align}
where $p_{\mathbf{v}^\perp}$ is the orthogonal projection onto $\mathrm{span}\{ v_1, \ldots, v_\ell \}^\perp$.
On the other hand by linearization of~\eqref{eq:aux_proof_crl} we have for any $f \in L^2(\pi)$
\begin{align*}
\int_{V_\ell(\R^d)}
\E\left(\mathrm{Var}(f(X) | p_{\mathbf{v}^\perp}(X))\right)\,
 \sigma^{(\ell)}(\d \mathbf{v})
& \geq \frac{\ell}{\kappa d} \mathrm{Var}(f(X))\,.
\end{align*}
The conclusion of Corollary~\ref{crl:inequality_HR_ell} follows by noticing that, if $\mathbf{v} \sim \sigma^{(\ell)}$, then $p_{\mathbf{v}^\perp}$ has the same distribution as $p_{\mathbf{w}}$ with $\mathbf{w} \sim \sigma^{(d-\ell)}$~\cite[Theorem 2.1]{Chikuse1990}. Thus it is enough to apply the two last inequalities above with $d-\ell$ instead of $\ell$. 
\end{proof}

\bibliographystyle{chicago}
\bibliography{bibliography}

\appendix

\section{Bound on the relative entropy for a factorized feasible start}\label{appn}

The following is a technical result that is used in the proof of Corollary~\ref{crl:GS_mix_time}. 

\begin{prop}
\label{prop:appendix_KL_factorized}
Let $\pi$ satisfy Assumption~\ref{asmp:convex_smooth}, $x^*$ be its unique mode, and $\mu(\d x)= \bigotimes_{m=1}^M \pi(\d x_m|x^*_{-m} )$. Then $\KL(\mu|\pi) \leq d \kappa^2$. 
\end{prop}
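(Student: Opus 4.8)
The plan is to compute $\KL(\mu|\pi)$ essentially by hand, exploiting the product structure of $\mu$ together with the Gaussian-type sandwich bounds that follow from Assumption~\ref{asmp:convex_smooth}. Writing $\mu_m=\pi(\cdot\,|\,x^*_{-m})$, the measure $\mu=\bigotimes_{m=1}^M\mu_m$ has density $\mu(x)=\prod_m e^{-U(x_m,x^*_{-m})}/Z_m$, with $Z_m=\int_{\sX_m}e^{-U(y_m,x^*_{-m})}\,\d y_m$, while $\pi(x)=e^{-U(x)}/Z$ with $Z=\int_{\R^d}e^{-U}$. Since $\mu$ factorizes, its entropy is the sum of the entropies of the $\mu_m$, and expanding $\log(\mu/\pi)$ directly yields the identity
\begin{equation*}
\KL(\mu|\pi)=\Bigl(\E_\mu[U(X)]-\sum_{m=1}^M\E_{X_m\sim\mu_m}[U(X_m,x^*_{-m})]\Bigr)+\Bigl(\log Z-\sum_{m=1}^M\log Z_m\Bigr).
\end{equation*}
The key point is that the (potentially large) constant $U(x^*)$ will enter both parentheses with opposite signs and cancel.

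Next I would collect the elementary ingredients, all consequences of $\nabla U(x^*)=0$ (as $x^*$ is the mode) and of $L$-smoothness / $\lambda$-convexity of $U$ — equivalently $\lambda\Id_d\preceq\nabla^2U\preceq L\Id_d$ when $U\in C^2$, cf.\ Remark~\ref{rmk:C2_asmp} — which transfer verbatim to each restriction $x_m\mapsto U(x_m,x^*_{-m})$: (i) $U(x^*)\le U(x)\le U(x^*)+\frac L2\|x-x^*\|^2$ for all $x$; (ii) sandwiching $U$ between these quadratics and computing Gaussian integrals gives $e^{-U(x^*)}(2\pi/L)^{d/2}\le Z\le e^{-U(x^*)}(2\pi/\lambda)^{d/2}$ and, for each $m$, $Z_m\ge e^{-U(x^*)}(2\pi/L)^{d_m/2}$; (iii) the second-moment bound $\E_{\mu_m}\|X_m-x^*_m\|^2\le d_m/\lambda$, obtained from the integration-by-parts identity $\int(y-x^*_m)\cdot\nabla_mU(y,x^*_{-m})\,\mu_m(\d y)=d_m$ combined with $(y-x^*_m)\cdot\nabla_mU(y,x^*_{-m})\ge\lambda\|y-x^*_m\|^2$. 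Summing (iii) over $m$ and using $\|x-x^*\|^2=\sum_m\|x_m-x^*_m\|^2$ gives $\E_\mu\|X-x^*\|^2\le d/\lambda$.

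Plugging these into the identity: in the first parenthesis $\E_\mu[U(X)]\le U(x^*)+\frac L2\cdot\frac d\lambda=U(x^*)+\frac{d\kappa}2$ while $\E_{\mu_m}[U(X_m,x^*_{-m})]\ge U(x^*)$, so it is at most $\frac{d\kappa}2-(M-1)U(x^*)$; in the second parenthesis $\log Z-\sum_m\log Z_m\le\bigl(-U(x^*)+\frac d2\log\frac{2\pi}\lambda\bigr)-\bigl(-MU(x^*)+\frac d2\log\frac{2\pi}L\bigr)=(M-1)U(x^*)+\frac d2\log\kappa$. Adding the two, the $U(x^*)$ terms cancel and
\begin{equation*}
\KL(\mu|\pi)\le\frac{d\kappa}2+\frac d2\log\kappa\le d\kappa\le d\kappa^2,
\end{equation*}
using $\log\kappa\le\kappa-1\le\kappa$ and $\kappa\ge1$ (this in fact proves the slightly sharper $\KL(\mu|\pi)\le\frac d2(\kappa+\log\kappa)$). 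The only mild technical point is justifying the integration by parts in (iii), which is routine since $U$ is $C^1$ (its gradient being $L$-Lipschitz) and $e^{-U(\cdot,x^*_{-m})}$ has Gaussian tails by $\lambda$-convexity; beyond that the proof is pure bookkeeping, and the one thing to get right is the sign accounting that makes $U(x^*)$ disappear.
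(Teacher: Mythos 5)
Your proof is correct, and it takes a genuinely different route from the one in the paper. Both arguments start by normalizing so that $U(x^*)$ cancels and by splitting $\KL(\mu|\pi)$ into a ``potential'' part $\E_\mu[U]-\sum_m\E_{\mu_m}[U(\cdot,x^*_{-m})]$ and a ``normalization'' part $\log Z-\sum_m\log Z_m$, and both handle the normalization part identically via the Gaussian sandwich $\frac{\lambda}{2}\|x-x^*\|^2\le U(x)-U(x^*)\le\frac{L}{2}\|x-x^*\|^2$. Where you diverge is the potential part: the paper introduces the cumulant generating function $g(t)=\log\int e^{tV}$ of $V(x)=\sum_m U(x_m,x^*_{-m})$ and exploits convexity of $g$ together with the pointwise bound $U\le\kappa V$ to control $\E_\mu[U-V]$. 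You instead derive the second-moment estimate $\E_\mu\|X-x^*\|^2\le d/\lambda$ directly via the Stein/integration-by-parts identity $\int(y-x^*_m)\cdot\nabla_m U\,\d\mu_m=d_m$ combined with strong convexity, and then plug into $L$-smoothness. This is arguably cleaner, and it yields the sharper intermediate bound $\KL(\mu|\pi)\le\frac{d}{2}(\kappa+\log\kappa)\le d\kappa$, which improves on the paper's $d\{(\kappa-1)(\log\kappa+\log 2)+\frac12\log\kappa\}$ both in constants and asymptotically in $\kappa$ (the paper's bound grows like $d\kappa\log\kappa$). The integration-by-parts step needs $\nabla_m U(x^*_m,x^*_{-m})=0$ (true since $x^*$ is the interior minimizer of the $C^1$ function $U$) and vanishing boundary terms (guaranteed by the Gaussian tails from $\lambda$-convexity), both of which you correctly flag as routine.
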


\begin{proof}
To simplify notation, assume $\pi(x) = \exp(-U(x)) / \int \exp(-U(x))\d x$ with $x^* = \arg\min_x U(x) = 0$ and $U(0) = \inf_x U = 0$, which can be always achieved by translation and addition.

The density of $\mu$ is given by the normalization of $\exp(-V)$ with 
\begin{equation*}
V(x) = \sum_{m=1}^M U(x_{m},x_{-m}^*).  
\end{equation*}
From Assumption~\ref{asmp:convex_smooth} and the normalization we have
\begin{equation}
\label{eq:appendix_aux}
\frac{\lambda}{2} \| x \|^2 \leq \, U(x), V(x) \leq \frac{L}{2} \| x \|^2.
\end{equation}
In particular it implies $U \leq \kappa V$. Expanding the formula for the Kullback-Leibler divergence
\begin{align*}
\KL(\mu|\pi) &= \int_{\R^d} (U(x)- V(x)) \, \mu(\d x) + \log \frac{\int_{\R^d} \exp(-U(x))\d x}{\int_{\R^d} \exp(-V(x))\d x}  
\\
&\leq (\kappa - 1) \frac{\int_{\R^d} V(x) \exp(-V(x))\d x}{\int_{\R^d} \exp(-V(x))\d x} + \log \frac{\int_{\R^d} \exp(-U(x))\d x}{\int_{\R^d} \exp(-V(x))\d x}.
\end{align*}
The second term can be upper bounded  by $d \log(\kappa)/2$ using~\eqref{eq:appendix_aux}.
To bound the first term, we introduce the function $g : t \mapsto \log \int \exp(t V) $, defined for $t < 0$. This is a convex function, thus
\begin{align*}
\frac{\int_{\R^d} V(x) \exp(-V(x))\d x}{\int_{\R^d} \exp(-V(x))\d x} 
&= 
g'(-1) 
\leq 
2 \left( g\left( -\frac{1}{2} \right) - g(-1) \right) 
\\
&= 2 \log \frac{\int_{\R^d} \exp(-V(x)/2)\d x}{\int_{\R^d} \exp(-V(x))\d x} \leq d (\log(\kappa) + \log(2)),
\end{align*}
where the last equality comes again from~\eqref{eq:appendix_aux}. 
Combining the above bounds, we obtain
\begin{equation*}
\KL(\mu|\pi) \leq d \left\{  (\kappa - 1) (\log(\kappa) + \log(2)) + \frac{1}{2} \log(\kappa) \right\}.
\end{equation*}
Finally, we use $\log(\kappa) \leq \kappa - 1$ and $\log(2) \leq 1$ so that the term in brackets is upper bounded by $(\kappa-1)(\kappa+1) \leq \kappa^2 -1 \leq \kappa^2$.
\end{proof}

\end{document}